\DeclareDocumentCommand\sobolev{m m o} {H^{#1}_{#3}(#2 \IfNoValueF{#3})}
\DeclareDocumentCommand\lp{m m o} {L^{#1}\IfNoValueF{#3}{_{#3}}\left(#2\right)}
\DeclareDocumentCommand\cont{o m o} {C\IfNoValueF{#1}{^{#1}}(#2\IfNoValueF{#3}{;#3})}
\DeclareDocumentCommand\contc{o m o} {C_c\IfNoValueF{#1}{^{#1}}(#2\IfNoValueF{#3}{;#3})}
\DeclareDocumentCommand\norm{s m o} {\IfBooleanTF{#1}{\|#2\|}{\left\|#2\right\|}\IfNoValueF{#3}{_{#3}}}
\DeclareDocumentCommand\seminorm{s m o} {\IfBooleanTF{#1}{\|#2\|}{\left\|#2\right\|}\IfNoValueF{#3}{_{#3}}}
\DeclareDocumentCommand\ip{s m m o} {\IfBooleanTF{#1}{( #2,#3 )}{\left( #2,#3 \right)}\IfNoValueF{#4}{_{#4}}}
\DeclareDocumentCommand\eip{s m m o} {\IfBooleanTF{#1}{\langle #2,#3 \rangle}{\left\langle #2,#3 \right\rangle}\IfNoValueF{#4}{_{#4}}}
\DeclareDocumentCommand\abs{s m o} {\IfBooleanTF{#1}{|#2|}{\left|#2\right|}\IfNoValueF{#3}{_{#3}}}
\DeclareDocumentCommand\eucnorm{s m o} {\IfBooleanTF{#1}{|#2|}{\left|#2\right|}\IfNoValueF{#3}{_{#3}}}
\DeclareDocumentCommand\wass{s o m m} {W_{\IfNoValueF{2}{#2}}\IfBooleanTF{#1}{(#3, #4)}{\left(#3, #4\right)}}
\DeclareMathOperator{\Law}{Law}
\DeclareMathOperator{\e}{e}
\DeclareMathOperator*{\trace}{tr}
\newcommand{\dummy}{\mathord{\color{black!33}\bullet}}%
\newcommand{\expect}{\mathbf{E}}
\newcommand{\mat}[1]{\mathit{#1}}
\newcommand{\nat}{\mathbf N}
\newcommand{\real}{\mathbf R}
\newcommand{\vect}[1]{\boldsymbol{\mathbf #1}}
\newcommand{\grad}{\nabla}
\renewcommand{\d}{\mathrm d}
\newcommand{\one}{\mathbbm 1}
\DeclareDocumentCommand \derivative{s m o m}{%
    \def\@der{\IfBooleanTF{#1}{\mathrm{d}}{\partial}}
    \def\@default{%
        \mathchoice{%
                \frac{%
                    \@der\ifnum\pdfstrcmp{#2}{1}=0\else^{#2}\fi {\IfNoValueTF{#3}{}{#3}}
                }{%
                    \@for\@token:={#4}\do{\@der \@token}
                }
            } {%
                \@for\@token:={#4}\do{\@der_{\@token}\ifnum\pdfstrcmp{#2}{1}=0\else^{#2}\fi} \IfNoValueTF{#3}{}{#3}
            } {} {}
    }
    \IfBooleanTF{#1}{\IfNoValueTF{#3}{\@default}{%
                #3%
                \ifnum\pdfstrcmp{#2}{1}=0'\else%
                \ifnum\pdfstrcmp{#2}{2}=0''\else%
                \ifnum\pdfstrcmp{#2}{3}=0'''\else%
                \ifnum\pdfstrcmp{#2}{4}=0^{(iv)}\else^{(#2)}\fi\fi\fi\fi
            }
        }{\@default}
}
\definecolor{darkred}{rgb}{0.5,0,0}
\definecolor{darkgreen}{rgb}{0,0.5,0}
\definecolor{darkblue}{rgb}{0,0,.5}
\theoremstyle{plain}
\newtheorem{lemma}{Lemma}[section]
\newtheorem{proposition}{Proposition}[section]
\newtheorem{remark}{Remark}[section]
\numberwithin{equation}{section}
\crefname{lemma}{Lemma}{Lemmata}
\crefname{remark}{Remark}{Remarks}
\crefname{proposition}{Proposition}{Propositions}
\crefname{section}{Section}{Sections}
\crefname{equation}{}{}
\Crefname{equation}{Equation}{Equations}
\title{Wasserstein stability estimates for covariance-preconditioned Fokker--Planck equations}
\author{J. A. Carrillo, U. Vaes}
\newcommand{\orcid}[1]{\href{https://orcid.org/#1}{\includegraphics[width=.4cm]{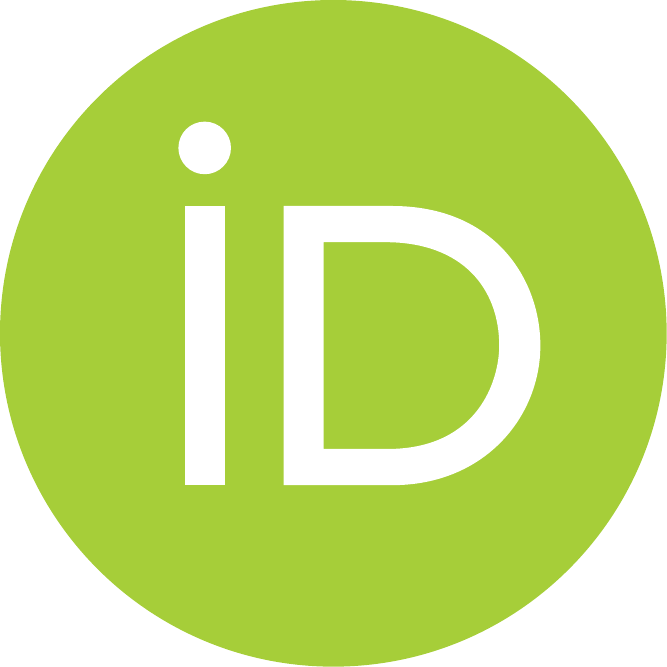}}}
\author{%
J. A. Carrillo%
\hspace{1mm}\orcid{0000-0001-8819-4660}%
\hspace{1cm} U. Vaes%
\hspace{1mm}\orcid{0000-0002-7629-7184} \\[.1cm]
{\small Department of Mathematics, Imperial College London} \\[.1cm]
{\small \tt{\{carrillo,u.vaes13\}@imperial.ac.uk}}
}
\begin{document}
\maketitle

\begin{center}
\begin{minipage}[b]{.9\textwidth}
    \setlength{\parskip}{6pt}
    \centerline{\textbf{Abstract}}
    We study the convergence to equilibrium of the mean field PDE associated with the derivative-free methodologies for solving inverse problems that are presented in~\cite{HV18,2019arXiv190308866G}.
    We show stability estimates in the euclidean Wasserstein distance for the mean field PDE by using optimal transport arguments.
    As a consequence, this recovers the convergence towards equilibrium estimates in~\cite{2019arXiv190308866G} in the case of a linear forward model.
\end{minipage}
\end{center}

\section{Introduction}%
\label{sec:introduction}

In this paper, we are concerned with the nonlocal Fokker--Planck equation
\begin{align}
    \label{eq:mean-field_equation}%
    \derivative{1}[f]{t}(\vect u, t) = \grad \cdot \Big( \mathcal C(f_t)  \left( \grad \Phi_R(\vect u; \vect y) \, f(\vect u, t)  + \sigma \grad f(\vect u, t) \right) \Big), \qquad & \vect u \in \real^d, t \in \real_{\geq 0},
\end{align}
where $\sigma > 0$, $f_t = f(\dummy, t)$,
$\mathcal C$ is the covariance operator defined by
\begin{align*}
    \mathcal C(f) = \int_{\real^d} \left(\vect u - \mathcal M(f)\right) \otimes \left(\vect u - \mathcal M(f)\right) \, f(\vect u) \, \d \vect u, \qquad \text{with } \mathcal M(f) = \int_{\real^d} \vect u \, f(\vect u) \, \d \vect u,
\end{align*}
and $\Phi(\dummy; \vect y)$ is a functional of the form
\begin{equation}
    \label{eq:least_squares_functional}%
    \Phi_R(\vect u; \vect y) = \frac{1}{2} \eucnorm*{\vect y -  \mathcal G(\vect u)}[\mat \Gamma]^2 + \frac{1}{2}\eucnorm*{\vect u}[\mat \Gamma_0]^2 =: \Phi(\vect u, \vect y) + \frac{1}{2}\eucnorm*{\vect u}[\mat \Gamma_0]^2.
\end{equation}
Here $\mathcal G: \real^d \rightarrow \real^{K}$ is a function that we will refer to as the \emph{forward model},
in view of the link with Bayesian inverse problems discussed below,
$\vect y \in \real^d$ is a given vector of \emph{observations} and $\mat \Gamma \in \real^{K \times K}, \mat \Gamma_0 \in \real^{d \times d}$ are symmetric, positive definite matrices.
We employed the notation $\eucnorm*{\dummy}[\mat \Gamma] := \eucnorm*{\mat \Gamma^{-\frac{1}{2}} \dummy}$,
where $\eucnorm{\dummy}$ is the usual euclidean norm.

Throughout this paper,
we restrict our attention to the case where $\mathcal G$ is a linear mapping and we write $\mathcal G(\vect u) = \mat G \vect u$,
with $\mat G \in \real^{K \times d}$.
We will assume that the matrix $\mat \Gamma_0^{-1} + \mat G^T \mat \Gamma^{-1} \mat G =: \mat B^{-1}$ is nonsingular,
so that the regularized least squares misfit $\Phi_R$,
given by \cref{eq:least_squares_functional},
admits the unique minimizer $\vect u_0 = \mat B \mat G^T \mat \Gamma^{-1} \vect y$.
Our main result is that,
if $f^1$ and $f^2$ are the solutions of \cref{eq:mean-field_equation} associated with the initial conditions $f^1_0$ and $f^2_0$, respectively,
then a stability estimate of the following form holds:
\begin{equation}%
    \label{eq:main_result}%
    \wass[2]{f^1_t}{f^2_t} \leq C(f^1_0, f^2_0; \mat G, \mat \Gamma) \, \gamma(t) \, \wass[2]{f^1_0}{f^2_0},
\end{equation}
where $C(\dummy_1, \dummy_2; \mat G, \mat \Gamma)$ depends only on the first two moments of $\dummy_1$ and $\dummy_2$
and the function $\gamma(t)$ converges to zero as $t \to \infty$ exponentially when $\sigma > 0$ and algebraically when $\sigma = 0$.
Here and in the rest of the paper, we employed the notation $f^{i}_t = f^i(\dummy, t)$, $i = 1, 2$.
If $\sigma > 0$,
then by taking one solution in \cref{eq:main_result} to be the equilibrium Gaussian
one recovers the equilibration estimate obtained in~\cite{2019arXiv190308866G}.
As a byproduct of our analysis,
we deduce the algebraic convergence of the solution towards a Dirac delta at
$\vect u_0$ when $\sigma=0$, i.e.\ to the solution of the Bayesian inverse problem,
generalizing to the mean field PDE the estimates obtained for a related particle system in~\cite{MR3654885}
and answering fully the equilibration open problem discussed in \cite{HV18} for the linear forward model.

We now turn our attention to the connection of the PDE~\eqref{eq:mean-field_equation} to mean field descriptions of the Ensemble Kalman methods for the Bayesian inverse problem.
The Fokker--Planck equation~\eqref{eq:mean-field_equation} can be linked to the inverse problem of finding $\vect u \in \real^d$ from an \emph{observation} $\vect y \in \real^K$ where
\begin{equation}
    \label{eq:inverse_problems}%
    \vect y = \mathcal G(\vect u) + \vect \eta.
\end{equation}
Here $\vect \eta$ is a random variable assumed to have Lebesgue density $\rho$.
In the Bayesian approach to inverse problems~\cite{MR3839555,doi:10.1002/wcc.535},
a probability measure called the \emph{prior} is placed on $\vect u$.
If we assume that this measure also has a density $\rho_0$ and that $\vect u$ is independent of $\vect \eta$,
then $(\vect u, \vect y)$ is a random variable with density $\rho(\vect y - \mathcal  G(\vect u)) \, \rho_0(\vect u)$.
The posterior density of $\vect u| \vect y$ (i.e. of $\vect u$ given an observation $\vect y$) is then given by the normalized probability density
\begin{equation}
    \label{eq:posterior_distribution}%
    \rho^{\vect y}(\vect u) = \frac{\rho(\vect y - \mathcal  G(\vect u)) \, \rho_0(\vect u)}{\int_{\real^d} \rho(\vect y - \mathcal  G(\vect u)) \, \rho_0(\vect u) \, \d \vect u}.
\end{equation}
In the particular case where $\rho$ and $\rho_0$ are the densities of Gaussians $\mathcal N(0, \mat \Gamma)$ and $\mathcal N(0, \mat \Gamma_0)$, respectively,
$\rho^{\vect y} \propto \e^{-\Phi_R(\vect u; \vect y)}$, where $\Phi_R$ is given by \cref{eq:least_squares_functional}.
We make this assumption below.

In~\cite{MR3041539},
the authors proposed to solve the inverse problem~\eqref{eq:inverse_problems} by applying a state-estimation method,
or filter, to the following artificial dynamics on $\real^d \times \real^K$ and associated observational model,
where we denote by $\vect u$ the first $d$ components of $\vect z$:
\[
    \vect z_{n+1} = \Xi(\vect z_n), \qquad \Xi(\vect z) = \begin{pmatrix} \vect u \\ \mathcal G(\vect u) \end{pmatrix}, \qquad \vect y_{n+1} = \begin{pmatrix} 0 & I \end{pmatrix} \vect z_{n+1} + \vect \eta_{n+1},
\]
where $\{\vect \eta_n\}_{n \in \nat}$ are i.i.d.\ $\mathcal N(0, h^{-1} \mat \Gamma)$ random variables.
If the observed data in the dynamics is fixed at the observation of the Bayesian inverse problem $\vect y$ for all steps,
then the $\vect u$-marginal of the posterior distribution at iteration $n$ has density
\[
    \rho_n(\vect u) \propto  \exp(-n h \Phi(\vect u; \vect y)) \, \rho_0(\vect u),
\]
which can be obtained by repeatedly applying the reasoning that led to \cref{eq:posterior_distribution}.
It is clear that this iteration will lead to a concentration of the mass of $\rho_i$ at minimizers of the (non-regularized) least squares functional $\Phi$.
We also remark that the posterior $\rho_n$ coincides with the posterior $\rho^{\vect y}$ of the inverse problem when $n h = 1$,
a fact that can be exploited to produce approximate samples of the posterior~\cite{Chen2012}.

If the prior $\rho_0$ is Gaussian and the forward model $\mathcal G$ is linear,
then the posteriors $\{\rho_n\}_{n \in \nat}$ can be captured exactly by a Kalman filter.
However, when the dimension of the state space is large,
which is often the case in scientific and engineering applications,
the Kalman filter is computationally expensive and a particle-based method such as the Ensemble Kalman filter (EnKF) becomes preferable.
This approach is also more general than the Kalman filter,
because it does not require that the forward model be linear.
The ensemble members $U = \{\vect u^{(j)}\}_{j=1}^J$ of EnKF are evolved according to Equation (4) in~\cite{MR3654885}:
\begin{equation}
    \label{eq:ensemble_kalman_iteration}
    \vect u^{(j)}_{n+1} = \vect u^{(j)}_{n} + h\mat C^{up} (U_n) (h \mat C^{pp}(U_n) + \mat \Gamma)^{-1} \left(\vect y^{(j)}_{n+1} - \mathcal G(\vect u^{(j)}_n) \right), \qquad j = 1, \dots, J,
\end{equation}
where $\mat C^{uu}$ (used later),  $\mat C^{up}$ and $\mat C^{pp}$ are given by
\begin{align*}
    &  \mat C^{uu}(U) = \frac{1}{J} \sum_{j=1}^{J} (\vect u^{(j)} - \bar {\vect u}) \otimes (\vect u^{(j)} - \bar {\vect u}),
    \qquad \mat C^{up}(U) = \frac{1}{J} \sum_{j=1}^{J} (\vect u^{(j)} - \bar {\vect u}) \otimes (\mathcal G (\vect u^{(j)}) - \bar {\mathcal G}), \\
    & \mat C^{pp}(U) = \frac{1}{J} \sum_{j=1}^{J} (\mathcal G (\vect u^{(j)}) - \bar {\mathcal G}) \otimes (\mathcal G (\vect u^{(j)}) - \bar {\mathcal G}),
    \qquad \bar {\vect u} = \frac{1}{J} \sum_{j=1}^{J} \vect u^{(j)},
    \qquad \bar {\mathcal G} = \frac{1}{J} \sum_{j=1}^{J} \mathcal G(\vect u^{(j)}),
\end{align*}
and $\vect y^{(j)}_n = \vect y + \vect \eta^{(j)}_n$, where $\{\vect \eta_n^{(j)}\}$ are i.i.d.\ vectors with $\vect \eta_1^{(1)} \sim \mathcal N(0, h^{-1} \mat \Sigma)$.
Traditionally, the distribution of the noise employed to perturb the simulated observations $\{\mathcal G(\vect u_n^{(j)})\}$ in the EnKF
coincides with that of the noise in the observational model,
which suggests taking $\mat \Sigma = \mat \Gamma$.
It was shown in~\cite{MR3654885}, however, that taking $\mat \Sigma = 0$ also produces an efficient method for solving inverse problems.
Furthermore, the authors noticed that,
when taking the limit $h \to 0$,
\cref{eq:ensemble_kalman_iteration} is a tamed Euler--Maruyama-type discretization of the SDE
\begin{align}
    \label{eq:dynamical_system_enkf}%
    \dot {\vect u}^{(j)} = \frac{1}{J} \sum_{k=1}^{J} \eip*{\mathcal  G(\vect u^{(k)}) - \bar {\mathcal G}}{\vect y - \mathcal G(\vect u^{(j)}) + \sqrt{\mat \Sigma} \, \dot {\vect W}^{(j)}}[\mat \Gamma] (\vect u^{(k)} - \bar {\vect u}), \qquad j = 1, \dotsc, J,
\end{align}
where ${\vect W}^{(j)}$, $j = 1, \dots, J$, are standard independent Brownian motions.
They carried out a thorough analysis of this continuous-time dynamics in the particular case where the forward model $\mathcal G$ is linear and $\mat \Sigma = 0$.
\Cref{eq:dynamical_system_enkf} can be now viewed as a derivative-free approach to inverse problems,
which was recently referred in~\cite{2019arXiv190308866G} as the Ensemble Kalman Inversion (EKI) method.

More recently, in~\cite{2019arXiv190308866G},
a modification of \cref{eq:dynamical_system_enkf} with $\mat \Sigma=0$ was suggested to enable sampling from the posterior distribution over an infinite time horizon;
the modified dynamics read
\begin{equation}
    \label{eq:dynamical_system_enkf2}%
\dot {\vect u}^{(j)} = \frac{1}{J} \sum_{k=1}^{J} \eip*{\mathcal  G(\vect u^{(k)}) - \bar {\mathcal G}}{\vect y - \mathcal G(\vect u^{(j)})}[\mat \Gamma] (\vect u^{(k)} - \bar {\vect u}) - \mat C^{uu}(U) \mat \Gamma_0^{-1} \vect u^{(j)} + \sqrt{2 \mat C^{uu}(U)} \, \dot {\vect W}^{(j)},
\end{equation}
for $j = 1, \dotsc, J$.
The second term in the right hand side is included so as to take the prior information into account.
The idea of including the covariance matrix $\mat C^{uu}(U)$ in that term,
as well as in the noise,
is motivated by the fact,
in the case of linear forward model,
\cref{eq:dynamical_system_enkf2} can equivalently be written as
\begin{equation}
    \label{eq:dynamical_system_enkf_nusken}%
    \dot {\vect u}^{(j)} = - \mat C^{uu}(U) \, \grad \Phi_R(\vect u^{(j)}) + \sqrt{2 \mat C^{uu}(U)} \, \dot {\vect W}^{(j)}, \qquad j = 1, \dots, J,
\end{equation}
which is expected to produce approximate samples of the posterior of the inverse problem for large $J$.
Indeed, the formal mean field limit of this interacting particle system is given by the law of the process defined by the McKean-type SDE
\begin{equation}
    \label{eq:overdamped_langevin}%
    \dot {\vect u} = - \mathcal C(f_t) \, \grad \Phi_R(\vect u) + \sqrt{2 \mathcal  C(f_t)} \, \dot {\vect W}, \qquad f_t := \Law(\vect u_t),
\end{equation}
which clearly admits $\frac{1}{Z} \, \e^{-\Phi_R}$ as an invariant measure,
where $Z$ is the normalization constant.
The associated Fokker--Planck equation for $f$ is given by \eqref{eq:mean-field_equation} (with $\sigma = 1$);
it was derived formally in~\cite{2019arXiv190308866G} and rigorously in \cite{DL19}.
Two remarks are in order.
First, we note that a concentration of the particles at any point of $\real^d$ is a stationary solution of the dynamics~\eqref{eq:dynamical_system_enkf2}
and, likewise, any Dirac delta is a stationary solution of \cref{eq:overdamped_langevin}.
Second, as recently noted in~\cite{2019arXiv190810890N},
the $J$-particle distribution $ \left(\frac{1}{Z}\right)^J \prod_{j=1}^{J} \e^{-\Phi_R(\vect u^{(j)}; \vect y)}$
is not invariant under the dynamics \cref{eq:dynamical_system_enkf_nusken}.

The strategy of the proof of the stability estimates~\eqref{eq:main_result} is the following:
we first realize that the moments up to second order of the equation \eqref{eq:mean-field_equation} are governed by a closed system of ODEs.
This is a common feature appearing in some of the simplest cases of homogeneous kinetic equations,
such as
the Fokker-Planck operator preserving the first two moments of the distribution function \cite{T99},
the Maxwellian molecules case for the Landau--Fokker--Planck equation \cite{V98},
and the Boltzmann equation for Maxwellian molecules;
see \cite{MR2355628,CCC} and the references therein.
Then, we focus on finding stability estimates for solutions that have the same covariance matrix,
which is simpler because the nonlinearity of the problem does not show up and we are reduced to a kind of linear Fokker--Planck equation.
Then we obtain the stability estimate for any two solutions,
regardless of the values of their first two moments,
by using optimal transport techniques.
The strategy of our proofs follows that employed in similar results for the Boltzmann equation in the Maxwellian case as in \cite{BT05,BT06,BC07,MR2355628}.

The paper is organized as follows. In \cref{sec:preliminaries}, we summarize known results and
we present some equilibration estimates for the first and second moments of the solution to \cref{eq:mean-field_equation}.
In \cref{sec:stability_on_wasserstein}, we give a simple proof of the stability estimates~\eqref{eq:main_result} in euclidean Wassertein distance based on analytical techniques in optimal transport.

\section{Preliminaries}%
\label{sec:preliminaries}%

We remind the reader that the forward model $\mathcal G = G$ is assumed to be linear throughout the paper,
and we recall the following result, proved in~\cite{2019arXiv190308866G}.
\begin{proposition}
    [Closed system of ordinary differential for the first and second moments]
    \label{proposition:closed_system_moments}%
Assume $f_t$ is a solution of \cref{eq:mean-field_equation},
and let $\mat C(t) := \mathcal C(f_t)$ and $\vect \delta(t) := \mathcal M(f_t) - \vect u_0$,
where $\mathcal M(f_t)$ denotes the first moment of $f_t$.
The evolution of $\mat C(t)$ and $\vect \delta(t)$ is governed by the system:
\begin{subequations}
\begin{align}
    \label{eq:closed_equation_first_moment}%
    \dot {\vect \delta} (t) &= - \, \mat C(t) \, \mat B^{-1} \, \vect \delta(t),  \qquad &\left(\dot \dummy := \derivative*{1}{t}\dummy \right) \\
    \label{eq:closed_equation_second_moment}%
    \dot {\mat C} (t) &= - 2 \, \mat C(t) \, \mat B^{-1} \, \mat C(t) + 2 \sigma \mat C(t).
\end{align}
\end{subequations}
\end{proposition}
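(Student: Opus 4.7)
The plan is to derive both ODEs by testing the mean field equation \cref{eq:mean-field_equation} against appropriate polynomial observables and integrating by parts, exploiting the crucial algebraic fact that the gradient of $\Phi_R$ is affine. Specifically, using the definitions of $\Phi_R$ in \cref{eq:least_squares_functional} and of $\vect u_0$ and $\mat B$, a direct computation gives
\begin{equation*}
    \grad \Phi_R(\vect u; \vect y) = -\mat G^T \mat \Gamma^{-1}(\vect y - \mat G \vect u) + \mat \Gamma_0^{-1} \vect u = \mat B^{-1} (\vect u - \vect u_0),
\end{equation*}
so the drift in \cref{eq:mean-field_equation} is the linear vector field $\mat B^{-1}(\vect u - \vect u_0)$ multiplied by the covariance $\mathcal C(f_t)$, which does not depend on $\vect u$ and can be pulled outside any integral in $\vect u$.

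For the first moment, I would multiply \cref{eq:mean-field_equation} by $\vect u$, integrate over $\real^d$, and integrate by parts once, assuming enough decay of $f_t$ and its derivative to discard boundary terms at infinity. The diffusion contribution vanishes because it reduces to $\sigma \mathcal C(f_t) \int \grad f_t \, \d \vect u = 0$, while the drift term gives
\begin{equation*}
    \derivative*{1}{t} \mathcal M(f_t) = - \mathcal C(f_t) \mat B^{-1} \int_{\real^d} (\vect u - \vect u_0) \, f_t(\vect u) \, \d \vect u = - \mat C(t) \, \mat B^{-1} \, \vect \delta(t),
\end{equation*}
and since $\vect u_0$ is constant this is exactly the equation for $\dot {\vect \delta}$.

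For the second moment, I would test the equation against $(\vect u - \mathcal M(f_t)) \otimes (\vect u - \mathcal M(f_t))$. The explicit time dependence of $\mathcal M(f_t)$ in the test function produces two correction terms that both vanish because $\int (\vect u - \mathcal M(f_t)) \, f_t \, \d \vect u = 0$. After integration by parts, the drift part splits by writing $\vect u - \vect u_0 = (\vect u - \mathcal M(f_t)) + \vect \delta(t)$; the piece involving $\vect \delta(t)$ integrates to zero against $(\vect u - \mathcal M(f_t)) f_t$ for the same centering reason, and the remaining piece contributes, by symmetrization, $-2 \mat C(t) \mat B^{-1} \mat C(t)$. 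The diffusion part yields $2 \sigma \mat C(t)$ after integrating by parts once against $\grad f_t$ and using $\grad \bigl[(\vect u - \mathcal M(f_t)) \otimes (\vect u - \mathcal M(f_t))\bigr]$.

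The computations are essentially routine, so there is no serious conceptual obstacle; the only delicate point is making the integration by parts rigorous. This requires that solutions $f_t$ decay fast enough at infinity for the moments of order up to two to be well-defined and for the boundary terms produced by repeated integration by parts against polynomially growing test functions to vanish. In a rigorous treatment I would either invoke an a priori moment bound (or regularity assumption) on the class of solutions considered here, or truncate the test functions by a smooth cutoff, perform the integration by parts in the finite domain, and pass to the limit using the assumed integrability. Since the statement is recalled from~\cite{2019arXiv190308866G}, I would simply cite that reference for the technical justification and present the above calculation as the core of the derivation.
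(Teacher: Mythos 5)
Your proposal is correct and follows essentially the same approach as the paper: test the PDE against $\vect u$ and against the centered quadratic $(\vect u - \mathcal M(f_t)) \otimes (\vect u - \mathcal M(f_t))$, integrate by parts, pull out $\mathcal C(f_t)$, and exploit that $\grad\Phi_R(\vect u;\vect y) = \mat B^{-1}(\vect u - \vect u_0)$ together with the centering identity $\int(\vect u - \mathcal M(f_t))f_t\,\d\vect u = 0$. The only cosmetic differences are that the paper carries out the index computation for $\dot{\mat C}$ explicitly and restricts the written argument to $\sigma = 0$ ``for simplicity,'' whereas you also spell out the diffusion contribution $2\sigma\mat C(t)$.
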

\begin{proof}
We show this only in the case $\sigma = 0$, for simplicity.
Multiplying \cref{eq:mean-field_equation} by $\vect u$,
integrating over $\real^d$,
and using the notation $\vect m(t) = \mathcal M(f_t)$, we obtain
\begin{align*}
    \dot {\vect m}(t)
    = - \, \mat C(t) \, \grad \Phi_R(\vect m(t), \vect y)
    &= - \, \mat C(t) \, \left(\mat G^T \mat \Gamma^{-1} (G \vect m(t) - \vect y ) + \mat \Gamma_0^{-1} \vect m(t) \right) \\
    &= - \, \mat C(t) \, \mat B^{-1} (\vect m(t) - \vect u_0),
\end{align*}
leading to \cref{eq:closed_equation_first_moment}.
Similarly, multiplying \cref{eq:mean-field_equation} by $\left(\vect u - \vect m(t)\right) \otimes \left(\vect u - \vect m(t)\right)$
and noticing that
\[
    \int_{\real^d} \derivative{1}{t} \big(\left(\vect u - \vect m(t)\right) \otimes \left(\vect u - \vect m(t)\right) f(\vect u, t) \big) \, \d \vect u = \int_{\real^d}  \left(\vect u - \vect m(t)\right) \otimes \left(\vect u - \vect m(t)\right) \derivative{1}[f]{t}(\vect u, t) \, \d \vect u,
\]
we obtain an equation for the covariance matrix.
Omitting the dependence of $\mat C$ and $\vect m$ on $t$ for convenience,
\begin{align*}
    \derivative*{1}{t} C_{ij} (t) &= - \int_{\real^d} \mat C : \left( \grad \big((u_i - m_i)(u_j - m_j)\big) \otimes \grad \Phi_R(\vect u, \vect y) \right) \, f(\vect u, t) \, \d \vect u, \\
                                  &= - \sum_{k,\ell} \int_{\real^d} C_{k \ell} \big( \delta_{ki} (u_j - m_j) + \delta_{kj} (u_i - m_i) \big) \, (\mat B^{-1}(\vect u - \vect u_0))_\ell \, f(\vect u, t) \, \d \vect u.
\end{align*}
Since the term in the first round brackets in the integral is mean-zero with respect to $f(\vect u, t)$,
we can remove and add constants in the other factor:
\begin{align*}
    \derivative*{1}{t} C_{ij} (t)  &= - \sum_{k,\ell} \int_{\real^d} C_{k \ell} \big( \delta_{ki} (u_j - m_j) + \delta_{kj} (u_i - m_i) \big) \, (\mat B^{-1} (\vect u - \vect m))_\ell \, f(\vect u, t) \, \d \vect u, \\
                                   &= - \sum_{\ell, p} \int_{\real^d} \big( C_{i \ell} \, (u_j - m_j) + C_{j \ell} \, (u_i - m_i) \big) \, B^{-1}_{\ell p} \, (u_p - m_p) \, f(\vect u, t) \, \d \vect u, \\
                                   &= - \sum_{\ell, p} \big( C_{i \ell} \, C_{j p} + C_{j \ell} \, C_{i p} \big) \, B^{-1}_{\ell p} = - 2 \sum_{\ell, p} C_{i \ell} \, C_{j p} \, B^{-1}_{\ell p},
\end{align*}
which, in matrix form, gives \cref{eq:closed_equation_second_moment}.
\end{proof}

If we assume that $\mat C_0 := \mat C(f_0)$ is positive definite,
then the solution of \cref{eq:closed_equation_second_moment} reads
\begin{equation}
    \label{eq:expression_covariance_matrix}%
    \mat C(t) =
    \begin{cases}
        \left( \frac{1 - \e^{-2 \sigma t}}{\sigma} \, \mat B^{-1} + \e^{- 2\sigma t} \, \mat C_0^{-1}\right)^{-1}
        \qquad &\text{if } \sigma > 0, \\
        \left(2 \mat B^{-1} t + \mat C_0^{-1}\right)^{-1}
        \qquad &\text{if } \sigma = 0.
    \end{cases}
\end{equation}
We notice that the solution in the case $\sigma = 0$ is the pointwise limit as $\sigma \to 0$ of that when $\sigma > 0$.
For a given solution $\mat C(t)$ of \cref{eq:closed_equation_second_moment},
we will denote by $\mat U(s, t; \mat C)$ the fundamental matrix associated with \cref{eq:closed_equation_first_moment};
this matrix solves
\begin{equation}
    \label{eq:fundamental_solution_definition_U}%
    \forall s \in \real, t \geq s: \qquad \partial_t \mat U(s, t; \mat C) = - \mat C(t) \, \mat B^{-1} \, \mat U(s, t; \mat C), \qquad \mat U(s, s; \mat C) = \mat I.
\end{equation}
\begin{lemma}
    [Bound for the fundamental matrix]
    \label{lemma:bound_for_the_fundamental_matrix}%
    Let $\mat C(t)$ be a solution of \cref{eq:closed_equation_second_moment} with initial condition $\mat C(0)$.
    The matrix $\mat U(s, t) := \mat U(s, t; \mat C)$ satisfies
    \begin{equation}
        \label{eq:bound_covariance_u}%
        \eucnorm{\mat U(s, t)}[2] \leq \e^{-\sigma (t - s)} \, \sqrt{\frac{\alpha(s)}{\alpha(t)}}
        \, \sqrt{\max(\eucnorm{\mat C(0)}[2], \eucnorm{\mat B}[2])}
        \, \sqrt{\max(\eucnorm{\mat C(0)^{-1}}[2], \eucnorm{\mat B^{-1}}[2])},
    \end{equation}
    where
    \begin{equation}
        \label{eq:definition_alpha}%
        \alpha(t) =
        \begin{cases}
            2 t + 1 & \text{if } \sigma = 0, \\
            \frac{1}{\sigma} (1-\e^{-2 \sigma t}) + \e^{-2 \sigma t} & \text{if } \sigma > 0.
        \end{cases}
    \end{equation}
    \end{lemma}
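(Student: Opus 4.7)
The plan is to build a weighted Lyapunov-type functional along the linear ODE. For an initial vector $\vect v_0 \in \real^d$, set $\vect v(t) = \mat U(s,t) \vect v_0$ and consider the quadratic form
\[
    \phi(t) = \vect v(t)^T \, \mat C(t)^{-1} \, \vect v(t).
\]
The key algebraic observation is that, by differentiating the identity $\mat C(t) \mat C(t)^{-1} = \mat I$ and using \cref{eq:closed_equation_second_moment},
\[
    \frac{\d}{\d t}\mat C(t)^{-1} = -\mat C(t)^{-1} \dot{\mat C}(t) \, \mat C(t)^{-1} = 2 \mat B^{-1} - 2\sigma \, \mat C(t)^{-1}.
\]
Combined with $\dot{\vect v} = - \mat C(t) \mat B^{-1} \vect v$, the cross term $2 \vect v^T \mat C^{-1} \dot{\vect v} = -2 \vect v^T \mat B^{-1} \vect v$ produced by differentiating $\phi$ exactly cancels the $2 \vect v^T \mat B^{-1} \vect v$ contribution from the evolution of $\mat C^{-1}$, leaving the clean dissipation identity $\dot \phi(t) = -2 \sigma \phi(t)$. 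Hence $\phi(t) = \e^{-2 \sigma (t-s)} \phi(s)$, and the usual Rayleigh-quotient bounds give
\[
    \eucnorm{\vect v(t)}^2 \leq \eucnorm{\mat C(t)}[2] \, \phi(t) \leq \e^{-2\sigma (t-s)} \, \eucnorm{\mat C(t)}[2] \, \eucnorm{\mat C(s)^{-1}}[2] \, \eucnorm{\vect v_0}^2,
\]
so that $\eucnorm{\mat U(s,t)}[2]^2 \leq \e^{-2\sigma (t-s)} \, \eucnorm{\mat C(t)}[2] \, \eucnorm{\mat C(s)^{-1}}[2]$.

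It then remains to control these two operator norms using the explicit solution in \cref{eq:expression_covariance_matrix}. In both cases $\sigma \geq 0$ one can write $\mat C(t)^{-1} = p(t) \mat B^{-1} + q(t) \mat C_0^{-1}$ with nonnegative coefficients satisfying $p(t) + q(t) = \alpha(t)$ (namely $p(t) = (1-\e^{-2\sigma t})/\sigma$, $q(t) = \e^{-2\sigma t}$ when $\sigma > 0$, and $p(t) = 2t$, $q(t) = 1$ when $\sigma = 0$). The triangle inequality yields
\[
    \eucnorm{\mat C(s)^{-1}}[2] \leq p(s) \eucnorm{\mat B^{-1}}[2] + q(s) \eucnorm{\mat C_0^{-1}}[2] \leq \alpha(s) \, \max\bigl(\eucnorm{\mat B^{-1}}[2], \eucnorm{\mat C_0^{-1}}[2]\bigr).
\]
For the other factor, I estimate $\mat C(t)^{-1}$ from below in the Loewner order by $\bigl(p(t)/\eucnorm{\mat B}[2] + q(t)/\eucnorm{\mat C_0}[2]\bigr) \mat I \succeq \alpha(t)/\max(\eucnorm{\mat B}[2], \eucnorm{\mat C_0}[2]) \, \mat I$, whence $\eucnorm{\mat C(t)}[2] \leq \max(\eucnorm{\mat B}[2], \eucnorm{\mat C_0}[2])/\alpha(t)$. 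Multiplying the two bounds and taking square roots gives exactly \cref{eq:bound_covariance_u}.

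The main obstacle is purely algebraic: the ODE for $\vect v$ is non-autonomous and $\mat C(t)$ does not commute with $\mat B$ in general, so a direct integration of the fundamental matrix via a time-ordered exponential is not viable. The crux is the choice of weight $\mat C(t)^{-1}$ in the Lyapunov functional, which is precisely what turns the Riccati-type equation \cref{eq:closed_equation_second_moment} into an affine equation for $\mat C^{-1}$ and makes the cross terms collapse into a single dissipative term $-2\sigma \phi$. Once this cancellation is secured, the rest is a routine spectral bookkeeping together with the explicit representation of $\mat C(t)^{-1}$ as a nonnegative combination of $\mat B^{-1}$ and $\mat C_0^{-1}$.
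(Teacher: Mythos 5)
Your proposal is correct and takes essentially the same route as the paper. The core identity is identical — both exploit the fact that the Riccati equation for $\mat C(t)$ becomes affine for $\mat C(t)^{-1}$, which makes $\mat U(s,t)^T \mat C(t)^{-1} \mat U(s,t)$ decay by the clean factor $\e^{-2\sigma(t-s)}$ — and the operator-norm bounds on $\mat C(t)$ and $\mat C(s)^{-1}$ via the nonnegative combination of $\mat B^{-1}$ and $\mat C(0)^{-1}$ with total weight $\alpha(t)$ are exactly the paper's convex-combination representation \eqref{eq:convex_combination}. The only cosmetic difference is that you phrase the dissipation identity for the scalar quadratic form $\vect v^T \mat C^{-1} \vect v$ and finish with Rayleigh-quotient estimates, whereas the paper works directly with the matrix $\mat U^T \mat C^{-1} \mat U$ and extracts the bound via the polar decomposition of $\mat C(t)^{-1/2}\mat U(s,t)$; your variant avoids the polar-decomposition step but is otherwise equivalent.
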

\begin{proof}
We notice that
\[
    \derivative*{1}{t} (\mat U(s, t)^T \, \mat C(t)^{-1} \, \mat U(s, t)) = - 2 \sigma (\mat U(s, t)^T \, \mat C(t)^{-1} \, \mat U(s, t)),
\]
which implies
\begin{equation}
    \label{eq:equation_convenient_u}%
    \Big(\mat C(t)^{-1/2} \mat U(s, t)\Big)^T \Big(\mat C(t)^{-1/2} \mat U(s, t)\Big) =  \mat U(s, t)^T \, \mat C(t)^{-1} \, \mat U(s, t) = \e^{-2 \sigma (t-s)} \mat C(s)^{-1}.
\end{equation}
Let us denote the polar decomposition of $\mat C(t)^{-1/2} \mat U(s, t)$ by $\mat Q(s, t) \, \mat S(s, t)$,
for some orthogonal matrix $\mat Q(s, t)$ and some symmetric matrix $\mat S(s, t)$.
Substituting this decomposition in \cref{eq:equation_convenient_u},
we obtain $\mat S(s, t) = \e^{- \sigma (t-s)} \mat C(s)^{-1/2}$
and so $\mat U(s, t) = \e^{-\sigma (t-s)} \mat C(t)^{1/2} \, \mat Q(s, t) \, \mat C(s)^{-1/2}$.
In particular,
\begin{align*}
    \eucnorm{\mat U(s, t)}[2] \leq \e^{-\sigma (t - s)} \, \sqrt{\eucnorm{\mat C(t)}[2] \eucnorm{\mat C(s)^{-1}}[2] }.
\end{align*}
Rewriting $\mat C(t)$ in a way that exhibits a convex combinations of $\mat B^{-1}$ and $\mat C(0)^{-1}$,
\begin{equation}
    \label{eq:convex_combination}%
    \mat C(t) = \frac{1}{\alpha(t)} \left( (1-\beta(t)) \, \mat B^{-1} + \beta(t) \, \mat C(0)^{-1} \right)^{-1},
     \qquad \beta(t) = \frac{\e^{-2 \sigma t}}{\alpha(t)},
\end{equation}
we deduce \cref{eq:bound_covariance_u}.
\end{proof}

In the sequel,
$\alpha(t)$ denotes the same function as in \cref{lemma:bound_for_the_fundamental_matrix},
and we employ the notations $\eucnorm{\dummy}[F] := \sum_{ij} \dummy_{ij}^2$ and $\eucnorm{\dummy}[2]$ to denote
the Frobenius matrix norm and the operator norm induced by the euclidean vector norm in $\real^d$, respectively.

\begin{lemma}
    [Convergence of the first and second moments]
    \label{lemma:moments_convergence_noise}%
    We consider two solutions $\mat C_1(t)$, $\mat C_2(t)$ of \cref{eq:closed_equation_second_moment}
    and the corresponding solutions $\vect \delta_1(t)$, $\vect \delta_2(t)$ of \cref{eq:closed_equation_first_moment},
    and we assume that
    \begin{align*}
        &\eucnorm{\mat C_1(0)}[2] \vee \eucnorm{\mat C_2(0)}[2] \vee \eucnorm{\mat B}[2] \leq M, \\
        &\eucnorm{\mat C_1(0)^{-1}}[2] \vee \eucnorm{\mat C_2(0)^{-1}}[2] \vee \eucnorm{\mat B^{-1}}[2] \leq m, \\
        &\eucnorm{\vect \delta_1(0)}[2] \vee \eucnorm{\vect \delta_1(0)}[2] \leq R.
    \end{align*}
    Then it holds that
    \begin{subequations}
    \begin{align}
        \label{eq:decrease_covariance}%
        & \eucnorm{\mat C_1(t) - \mat C_2(t)}[F]  \leq M^2 \, m^2 \, \eucnorm{\mat C_1(0) - \mat C_2(0)}[F] \,  \frac{\e^{-2 \sigma t}}{\alpha(t)^2}, \\
        \label{eq:decrease_first_moment}%
        & \eucnorm{\vect \delta_1(t) - \vect \delta_2(t)}[2] \leq \left( \sqrt{mM} \, \eucnorm{\vect \delta_1(0) - \vect \delta_2(0)}[2]  + \frac{1}{2} \, m^4 M^3 R \, \eucnorm{\mat C_2(0) - \mat C_1(0)}[F] \right) \frac{\e^{- \sigma t}}{\sqrt{\alpha(t)}},
    \end{align}
    \end{subequations}
\end{lemma}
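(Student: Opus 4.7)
My plan is to prove the two estimates in order, using the covariance bound as an ingredient in the first-moment bound.

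For the covariance estimate, I will exploit the explicit formula \cref{eq:expression_covariance_matrix}. Writing the expression for $\mat C_i(t)^{-1}$, $i \in \{1,2\}$, and subtracting, the contribution of the term involving $\mat B^{-1}$ cancels, leaving
\begin{equation*}
    \mat C_1(t)^{-1} - \mat C_2(t)^{-1} = \e^{-2\sigma t}\bigl(\mat C_1(0)^{-1} - \mat C_2(0)^{-1}\bigr).
\end{equation*}
Using the algebraic identity $\mat C_1 - \mat C_2 = \mat C_1(\mat C_2^{-1} - \mat C_1^{-1})\mat C_2$, applied once at time $t$ and once at time $0$, together with the inequality $\eucnorm{\mat A \mat X \mat B}[F] \leq \eucnorm{\mat A}[2]\eucnorm{\mat X}[F]\eucnorm{\mat B}[2]$, reduces the task to an operator-norm bound on $\mat C_i(t)$. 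The representation \eqref{eq:convex_combination} immediately gives $\eucnorm{\mat C_i(t)}[2] \leq M/\alpha(t)$, because the convex combination $(1-\beta(t))\mat B^{-1} + \beta(t)\mat C_i(0)^{-1}$ is positive definite with smallest eigenvalue at least $1/M$ (by the monotonicity of $\lambda_{\min}$ under sums of positive matrices). Assembling the factor $\e^{-2\sigma t}$, two copies of $M/\alpha(t)$, and the factor $m^2$ coming from the inverse-difference identity yields \eqref{eq:decrease_covariance}.

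For the first moment, I will work with the fundamental matrices $\mat U_i(s,t) := \mat U(s,t;\mat C_i)$, so that $\vect\delta_i(t) = \mat U_i(0,t)\,\vect\delta_i(0)$, and split
\begin{equation*}
    \vect\delta_1(t) - \vect\delta_2(t) = \mat U_1(0,t)\bigl(\vect\delta_1(0) - \vect\delta_2(0)\bigr) + \bigl(\mat U_1(0,t) - \mat U_2(0,t)\bigr)\vect\delta_2(0).
\end{equation*}
The first term is bounded directly by \cref{lemma:bound_for_the_fundamental_matrix} with $\alpha(0)=1$, contributing $\sqrt{mM}\,\e^{-\sigma t}/\sqrt{\alpha(t)}\cdot \eucnorm{\vect\delta_1(0) - \vect\delta_2(0)}[2]$. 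For the second term I use $\eucnorm{\vect\delta_2(0)}[2] \leq R$ and control $\eucnorm{\mat U_1(0,t)-\mat U_2(0,t)}[2]$ via Duhamel's formula: differentiating $\mat U_1(0,t)-\mat U_2(0,t)$ using \eqref{eq:fundamental_solution_definition_U} and applying variation of parameters gives
\begin{equation*}
    \mat U_1(0,t) - \mat U_2(0,t) = -\int_0^t \mat U_1(s,t)\bigl(\mat C_1(s) - \mat C_2(s)\bigr)\mat B^{-1}\mat U_2(0,s)\, \d s.
\end{equation*}
Taking operator norms and inserting \cref{lemma:bound_for_the_fundamental_matrix} for each fundamental matrix together with the just-established \eqref{eq:decrease_covariance}, the factors $\sqrt{\alpha(s)/\alpha(t)}$, $1/\alpha(s)^2$, and $1/\sqrt{\alpha(s)}$ collapse into an integrand proportional to $\e^{-2\sigma s}/\alpha(s)^2$, with prefactor $M^3 m^4\,\e^{-\sigma t}/\sqrt{\alpha(t)}$ and a factor $\eucnorm{\mat C_1(0)-\mat C_2(0)}[F]$.

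The principal technical step is then the sharp inequality $\int_0^t \e^{-2\sigma s}/\alpha(s)^2\, \d s \leq \tfrac{1}{2}$, uniformly in $\sigma \geq 0$ and $t \geq 0$, which is what produces the prefactor $\tfrac{1}{2}$ in \eqref{eq:decrease_first_moment}. For $\sigma = 0$ this is an explicit computation with $\alpha(s)=2s+1$. For $\sigma > 0$ with $\sigma \neq 1$, the identity $\alpha'(s) = 2(1-\sigma)\e^{-2\sigma s}$ lets me rewrite the integral in closed form as $(1-1/\alpha(t))/(2(1-\sigma))$, and the companion identity $\sigma\alpha(t) = 1 - (1-\sigma)\e^{-2\sigma t}$ then yields the $\tfrac{1}{2}$ bound after a short sign analysis distinguishing $\sigma<1$ from $\sigma>1$; the case $\sigma=1$ is trivial since $\alpha \equiv 1$. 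This integral estimate is the only place where the distinction between the algebraic ($\sigma=0$) and exponential ($\sigma>0$) decay regimes must be reconciled into a single bound, and is where I expect the main bookkeeping to lie; combining it with the bound from the first term completes \eqref{eq:decrease_first_moment}.
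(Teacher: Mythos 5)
Your argument is correct and takes essentially the same route as the paper: the covariance bound via the identity $\mat C_1 - \mat C_2 = \mat C_1(\mat C_2^{-1}-\mat C_1^{-1})\mat C_2$ and the convex-combination representation of $\mat C_i(t)^{-1}$, and the first-moment bound via the variation-of-constants/Duhamel formula for $\vect\delta_1(t)-\vect\delta_2(t)$, combined with \cref{lemma:bound_for_the_fundamental_matrix}, \cref{eq:decrease_covariance}, and the integral estimate $\int_0^t \e^{-2\sigma s}\alpha(s)^{-2}\,\d s\leq 1/2$. The only cosmetic difference is that you derive the Duhamel representation by separately controlling $\mat U_1(0,t)-\mat U_2(0,t)$ rather than by applying variation of constants directly to the ODE satisfied by $\vect\delta_1-\vect\delta_2$; after substitution the two representations coincide.
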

\begin{proof}
    By a sub-multiplicative property of the Frobenius norm,
    \[
        \eucnorm{\mat C_1(t) - \mat C_2(t)}[F] = \eucnorm{\mat C_1(t)}[2] \, \eucnorm{\mat C_1(t)^{-1} - \mat C_2(t)^{-1}}[F] \, \eucnorm{\mat C_2(t)}[2],
    \]
    We observe $\mat C_1(t)^{-1} - \mat C_2(t)^{-1} = \e^{-2 \sigma t} (\mat C_1(0)^{-1} - \mat C_2(0)^{-1})$ so,
    using the sub-multiplicative property of the norm again,
    \begin{equation}
        \label{eq:intermediate_equation_moment_bound_covariance}%
        \eucnorm{\mat C_1(t) - \mat C_2(t)}[F] = \eucnorm{\mat C_1(t)}[2] \eucnorm{\mat C_1(0)^{-1}}[2] \, \eucnorm{\mat C_0(t) - \mat C_2(0)}[F] \, \eucnorm{\mat C_2(0)^{-1}}[2] \, \eucnorm{\mat C_2(t)}[2] \, \e^{-2 \sigma t}.
    \end{equation}
    Since $\frac{1}{\alpha(t)}\mat C_i(t)^{-1}$ is a convex combination of $\mat C_i(0)^{-1}$ and $\mat B^{-1}$,
    \begin{align*}
        \eucnorm{\mat C_i(t)}[2] \leq \frac{1}{\alpha(t)} \, \max \Big( \eucnorm{\mat C_i(0)}[2], \eucnorm{\mat B}[2] \Big), \qquad i = 1, 2,
    \end{align*}
    leading to \cref{eq:decrease_covariance}.

    For the first moments,
    we have
    \begin{align*}
        \derivative*{1}{t} (\vect \delta_1(t) - \vect \delta_2(t) ) = - \mat C_1(t) \, \mat B^{-1} (\vect \delta_1(t) - \vect \delta_2(t)) - (\mat C_2(t) - \mat C_1(t)) \, \mat B^{-1} \, \vect \delta_2(t).
    \end{align*}
    By the variation-of-constants formula,
    and with the shorthand notation $\mat U_i(s, t) := \mat U(s, t, \mat C_i)$,
    we deduce that
    \begin{align*}
        \vect \delta_1(t) - \vect \delta_2(t) = - \mat U_1(s, t) (\vect \delta_1(s) - \vect \delta_2(s)) - \int_{s}^{t} \mat U_1(u, t) (\mat C_2(u) - \mat C_1(u)) \, \mat B^{-1} \, \vect \delta_2(u) \, \d u.
    \end{align*}
    Employing \cref{eq:bound_covariance_u,eq:decrease_covariance},
    and using the fact that $\vect \delta_2(\vect u) = \mat U_2(s, u) \, \vect \delta_2(s)$,
    we obtain
    \begin{align}
        \label{eq:intermediate_step_moment_bound}%
        \eucnorm{\vect \delta_1(t) - \vect \delta_2(t)}[2] & \leq  \sqrt{mM} \, \sqrt{\frac{\alpha(s)}{\alpha(t)}} \, \e^{- \sigma (t - s)} \, \eucnorm{\vect \delta_1(s) - \vect \delta_2(s)}[2] \\
        \notag%
        & \quad + m^3 M^3 \eucnorm{\vect \delta_2(s)} \, \sqrt{\frac{\alpha(s)}{\alpha(t)}} \, \e^{- \sigma(t - s)} \, \eucnorm{\mat C_2(0) - \mat C_1(0)}[F]
        \int_{s}^{t} \frac{\e^{-2 \sigma u}}{\alpha(u)^2} \eucnorm{\mat B^{-1}}[2] \, \vect \, \d u.
    \end{align}
    We calculate that :
    \begin{align}
        \notag%
        I(s, t) &:= \int_{s}^{t} \frac{\e^{-2 \sigma u}}{\alpha(u)^2} \, \d u
        = \frac{1}{2 (\sigma - 1)} \left(\frac{1}{\alpha(t)} - \frac{1}{\alpha(s)}\right)
        = \frac{\e^{-2 \sigma s} - \e^{-2 \sigma t}}{2 \, \sigma \, \alpha(s) \, \alpha(t)} , \qquad \sigma \neq 0, 1, \\
        \label{eq:integral}%
        &\leq \lim_{t \to \infty} I(s, t) = \frac{\e^{-2 \sigma s} - \e^{-2 \sigma \infty}}{2 \, \sigma \, \alpha(s) \, \alpha(\infty)} = \frac{\e^{-2 \sigma s}}{2 \, \alpha(s)}.
    \end{align}
    (This calculation fails for $\sigma = 0$ and $\sigma = 1$,
    but it is easy to check that the conclusion holds for any $\sigma \geq 0$.)
    This leads to \cref{eq:decrease_first_moment} after taking $s = 0$ (the case $s > 0$ will be useful in \cref{remark:convergence_of_the_fundamental_matrices} below) and rearranging.
\end{proof}

We note that, in the case $\sigma = 0$,
\cref{eq:decrease_covariance} cannot be employed,
by letting $\mat C_2(0) \to 0$,
to deduce the rate of convergence of $\mat C_1(t)$ to 0,
because the bound $m$ in the assumptions grows to $+ \infty$ as $\mat C_2(0) \to 0$.
It can, however, be employed (setting $\vect \delta_2(0) = 0$ and $\mat C_2(0) = \mat C_1(0)$) to deduce that
$\vect \delta_1(t)$ converges to zero with rate $\e^{- \sigma t}/\sqrt{\alpha(t)}$,
which is consistent with \cref{eq:bound_covariance_u}.

\begin{remark}
    \label{remark:convergence_of_the_fundamental_matrices}
    Since $\vect \delta_i(t) =  \mat U_i(s, t) \, \vect \delta_i(s)$, for $i = 1, 2$, by definition of $\mat U_i(s, t)$,
    it follows from \cref{eq:intermediate_step_moment_bound} that
    \begin{equation}
        \label{eq:contraction_U}%
        \forall s \leq t, \qquad \eucnorm{\mat U_2(s, t) - \mat U_1(s, t)}[2] \leq m^4 M^3 \, \eucnorm{\mat C_2(0) - \mat C_1(0)}[F] \, \frac{\e^{-\sigma (s + t)}}{\sqrt{\alpha(s) \alpha(t)}},
    \end{equation}
    where the constants $m$ and $M$ are defined as before.
\end{remark}

In the rest of this paper, we denote by $g(\dummy; \vect \mu, \mat \Sigma)$ the density of the Gaussian $\mathcal N(\vect \mu, \mat \Sigma)$.
\begin{lemma}
    [Propagation of Gaussians for the linear equation]
    \label{lemma:fundamental_solution}%
    Let $\mat C(t)$ be the solution of
    \[
        \dot {\mat C} (t) = - 2 \, \mat C(t) \, \mat B^{-1} \, \mat C(t) + 2 \sigma \mat C(t), \qquad \mat C(0) = \mat C_0,
    \]
    for a given matrix $\mat C_0$.
    Then the solution of the linear Fokker--Planck equation
    \begin{subequations}
    \begin{align}
        \label{eq:mean-field_equation_linear}%
        &\derivative{1}[f]{t}(\vect u, t) = \grad \cdot \left(\mat C(t) \mat B^{-1}(\vect u - \vect u_0) \, f(\vect u, t) \right)  + \sigma \grad \cdot (\mat C(t) \, \grad f(\vect u, t)), \\
        &f(\vect u, 0) = g(\vect u; \vect \mu_0, \mat \Sigma_0),
    \end{align}
    \end{subequations}
    is given by the Gaussian density $f(\vect u, t) = g(\vect u; \vect \mu(t), \mat \Sigma(t))$
    where
    \begin{subequations}
    \begin{align}
        \label{eq:fundamental_solution_first_moment}%
        \vect \mu(t) &= \vect u_0 + \mat U(0, t) \, (\vect \mu_0 - \vect u_0), \\
        \label{eq:fundamental_solution_second_moment}%
        \mat \Sigma(t) &= \mat U(0,t) \, \mat \Sigma_0 \mat U(0, t)^T + 2 \sigma \, \int_{0}^{t} \mat U(s, t) \, \mat C(s) \, \mat U(s, t)^T \, \d s.
    \end{align}
    \end{subequations}
    Here $\mat U(\dummy, \dummy) := \mat U(\dummy, \dummy; \mat C)$ is given by \cref{eq:fundamental_solution_definition_U}.
    If $\mat \Sigma_0 = 0$,
    then the matrix $\mat \Sigma(t)$ admits the following explicit expression in terms of $\mat C(t)$:
    \begin{equation}
        \label{eq:expression_sigma}
        \mat \Sigma(t) = (1 - \e^{-2 \sigma t}) \, \mat C(t).
    \end{equation}
\end{lemma}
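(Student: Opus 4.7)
The plan is to reduce the claim to the propagation of Gaussians under a linear SDE. Equation \eqref{eq:mean-field_equation_linear} is the Fokker--Planck equation associated with
\begin{equation*}
    \d \vect u_t = - \mat C(t) \, \mat B^{-1} (\vect u_t - \vect u_0) \, \d t + \sqrt{2 \sigma \, \mat C(t)} \, \d \vect W_t, \qquad \vect u_0 \sim g(\dummy; \vect \mu_0, \mat \Sigma_0),
\end{equation*}
whose drift is affine in $\vect u_t$ and whose diffusion matrix is deterministic. A standard argument for linear SDEs with deterministic coefficients (for instance, via the explicit variation-of-constants formula, or equivalently by noting that Gaussianity is preserved by the Euler scheme and passes to the limit) yields that $f_t$ remains Gaussian for all $t$. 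It therefore suffices to identify its mean $\vect \mu(t)$ and covariance $\mat \Sigma(t)$.

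I would derive ODEs for $\vect \mu$ and $\mat \Sigma$ by essentially the same moment computation as in the proof of \cref{proposition:closed_system_moments}. Testing \eqref{eq:mean-field_equation_linear} against $\vect u$ yields $\dot{\vect \mu}(t) = -\mat C(t) \, \mat B^{-1} (\vect \mu(t) - \vect u_0)$, which by the very definition \eqref{eq:fundamental_solution_definition_U} of $\mat U(s,t)$ has \eqref{eq:fundamental_solution_first_moment} as its unique solution. Testing against $(\vect u - \vect \mu(t)) \otimes (\vect u - \vect \mu(t))$ proceeds as in that proof for the drift term (giving the symmetric contribution $-\mat C \mat B^{-1} \mat \Sigma - \mat \Sigma \mat B^{-1} \mat C$), while the diffusion term yields $2\sigma \mat C(t)$ after two integrations by parts, so one arrives at the Lyapunov-type matrix ODE
\begin{equation*}
    \dot{\mat \Sigma}(t) = - \mat C(t) \, \mat B^{-1} \, \mat \Sigma(t) - \mat \Sigma(t) \, \mat B^{-1} \, \mat C(t) + 2 \sigma \mat C(t).
\end{equation*}
To obtain \eqref{eq:fundamental_solution_second_moment}, I would simply verify the candidate expression by direct differentiation: the homogeneous term $\mat U(0,t) \mat \Sigma_0 \mat U(0,t)^T$ is handled using \eqref{eq:fundamental_solution_definition_U} for both $\partial_t \mat U(0,t)$ and its transpose, while for the integral term, differentiation under the integral sign together with the boundary contribution $\mat U(t,t) = \mat I$ produces precisely the inhomogeneity $2\sigma \mat C(t)$.

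For the explicit formula \eqref{eq:expression_sigma} in the case $\mat \Sigma_0 = 0$, rather than evaluating the integral in \eqref{eq:fundamental_solution_second_moment} directly, I would substitute the ansatz $\mat \Sigma(t) := (1 - \e^{-2 \sigma t}) \, \mat C(t)$ into the covariance ODE and invoke uniqueness. Differentiating and using \eqref{eq:closed_equation_second_moment}, the right-hand side collapses to $2 \sigma \mat C(t) - 2(1-\e^{-2 \sigma t}) \, \mat C(t) \mat B^{-1} \mat C(t)$; since $\mat \Sigma(t)$ is a scalar multiple of $\mat C(t)$ these two matrices commute, and the quadratic term splits symmetrically as $\mat C \mat B^{-1} \mat \Sigma + \mat \Sigma \mat B^{-1} \mat C$. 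Combined with $\mat \Sigma(0) = 0$, this matches the ODE and uniqueness gives \eqref{eq:expression_sigma}. The only subtleties to be mindful of are the non-commutativity of $\mat B^{-1}$, $\mat C(t)$, and $\mat U(s,t)$ when manipulating matrix products, and the fact that the covariance ODE preserves symmetry (so that in the special case the commutativity of $\mat \Sigma$ with $\mat C$ is legitimately available); none of these points presents a genuine analytic obstacle, since $\mat C(t)$ stays positive definite on any compact time interval by \eqref{eq:expression_covariance_matrix}.
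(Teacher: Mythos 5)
Your proposal is correct and arrives at the same moment ODEs, the same formulas for $\vect\mu(t)$ and $\mat\Sigma(t)$, and the same verification of \eqref{eq:expression_sigma} by substitution; the one place where you take a genuinely different route is the step that establishes that $f_t$ stays Gaussian. The paper does this purely at the PDE level: it plugs the Gaussian ansatz $f(\vect u,t)=g(\vect u;\vect\mu(t),\mat\Sigma(t))$ directly into \eqref{eq:mean-field_equation_linear}, observes that both sides become quadratic polynomials in $\vect u$ after dividing by $f$, and matches the Hessians, gradients, and constant terms (the last using the derivative-of-determinant identity) to recover the moment ODEs simultaneously with verifying the ansatz. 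You instead invoke the probabilistic representation of \eqref{eq:mean-field_equation_linear} as an Ornstein--Uhlenbeck-type SDE with deterministic affine drift and deterministic diffusion, and appeal to the standard fact that such SDEs propagate Gaussian laws (the explicit variation-of-constants formula makes $\vect u_t$ a linear functional of the Gaussian pair $(\vect u_0, W)$). Both are legitimate; the paper's route is self-contained within the PDE setting and yields the moment equations as a byproduct of coefficient matching, while yours front-loads a known probabilistic lemma and then only needs the first- and second-moment computations, which is shorter. Note that the paper does later adopt exactly your SDE viewpoint in the remark following \cref{proposition:convergence_over_manifold_same_covariance}, so this tool is consistent with the paper's framework. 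A minor remark on your treatment of \eqref{eq:expression_sigma}: the appeal to commutativity is more than is needed, since with $\mat\Sigma=(1-\e^{-2\sigma t})\,\mat C$ the two products $\mat C\,\mat B^{-1}\mat\Sigma$ and $\mat\Sigma\,\mat B^{-1}\mat C$ are each manifestly equal to $(1-\e^{-2\sigma t})\,\mat C\,\mat B^{-1}\mat C$ without any commutation argument; but this does not affect correctness.
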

\begin{proof}
Proceeding as in \cref{proposition:closed_system_moments},
we deduce that the first and second moments of any solution to \cref{eq:mean-field_equation_linear},
which we denote $\vect \mu$ and $\mat \Sigma$, satisfy
\begin{subequations}
\label{eq:system_moments_linear}%
\begin{align}
    \label{eq:equation_mu}%
    \dot {\vect \mu}(t) &= - \mat C(t) \, \mat B^{-1}(\vect \mu(t) - \vect u_0), \\
    \label{eq:equation_sigma}%
    \dot {\mat \Sigma}(t) &= - \mat C(t) \, \mat B^{-1} \, \mat \Sigma(t) - \mat \Sigma(t) \, \mat B^{-1} \, \mat C(t) + 2 \sigma \mat C(t).
\end{align}
\end{subequations}
We then verify,
proceeding similarly to~\cite{duongmean,2016arXiv161101593A},
that the Gaussian ansatz
\[
    f(\vect u, t) = \frac{1}{(2\pi)^{d/2} \, \sqrt{\det \mat \Sigma(t)}} \, \e^{ - \frac{1}{2} \, (\vect u - \vect \mu(t))^T \mat \Sigma(t)^{-1} (\vect u - \vect \mu(t)) }
\]
is indeed a solution.
Omitting the dependence of $\mat C$, $\vect \mu$ and $\mat \Sigma$ on $t$ for notational convenience,
we calculate that the left-hand side of \cref{eq:mean-field_equation_linear} reads
\begin{align*}
    \frac{LHS}{f(\vect u, t)} =
        \dot {\vect \mu}^T \mat \Sigma^{-1} (\vect u - \vect \mu)
        + \frac{1}{2} \, (\vect u - \vect \mu)^T {\mat \Sigma}^{-1} \, \dot {\mat \Sigma} \, {\mat \Sigma}^{-1} (\vect u - \vect \mu)
        + \frac{1}{2 \det \mat \Sigma} \derivative*{1}{t} \left( \det \mat \Sigma\right)
\end{align*}
and the right hand is
\begin{align*}
    \frac{RHS}{f(\vect u, t)} =
        - (\vect u - \vect u_0)^T \mat B^{-1} \mat C \mat \Sigma^{-1} (\vect u - \vect \mu)
        + \sigma (\vect u - \vect \mu) \mat \Sigma^{-1} \mat C \mat \Sigma^{-1} (\vect u - \vect \mu)
        + \trace(\mat C \mat B^{-1} - \sigma \mat C \mat \Sigma^{-1} ).
\end{align*}
Both sides of the equation are quadratic polynomials in $\vect u$.
Equating the Hessians w.r.t. $\vect u$ of the coefficients of both sides,
and multiplying left and right by $\mat \Sigma$, we obtain \cref{eq:equation_sigma}.
Taking this equation into account and equating the gradients,
we obtain \cref{eq:equation_mu}.
It remains to check that the constant terms (w.r.t. $\vect u$) coincide,
which can be seen from
the fact that $\trace(\mat C \mat B^{-1} - \sigma \mat C \mat \Sigma^{-1} ) = - \frac{1}{2} \trace(\dot {\mat \Sigma} \mat \Sigma^{-1})$, by \cref{eq:equation_sigma},
and the formula for the derivative of the determinant function:
\[
    0 = \derivative*{1}{t} \int_{\real^{d}} \frac{1}{(2\pi)^{d/2} \, \sqrt{\det \mat \Sigma}} \, \e^{ - \frac{1}{2} \, \vect u^T \mat \Sigma^{-1} \vect u } \, \d \vect u
    = \frac{1}{2}\left( \trace (\dot {\mat \Sigma} \mat \Sigma^{-1}) - \frac{1}{\det \mat \Sigma} \derivative*{1}{t} \left( \det \mat \Sigma\right) \right).
\]
For general initial conditions $\vect \mu_0$ and $\mat \Sigma_0$,
we can check that the solution to the system of equations~\eqref{eq:system_moments_linear} is given by \cref{eq:fundamental_solution_first_moment,eq:fundamental_solution_second_moment}.
\Cref{eq:expression_sigma} can be checked by simple substitution in~\cref{eq:equation_sigma}.
\end{proof}

\begin{remark}
We remark that, for $\sigma > 0$,
$\mat U(t - s, t) \to \e^{- \sigma s} \, \mat I$ as $t \to \infty$ because $\mat C(t) \to \sigma \mat B$.
Therefore, employing \cref{lemma:bound_for_the_fundamental_matrix,lemma:fundamental_solution} and dominated convergence, we deduce
\begin{align*}
    \lim_{t \to \infty} \mat \Sigma(t) &= 2 \sigma \lim_{t \to \infty} \int_{0}^{t} \mat U(s, t) \, \mat C(s) \, \mat U(s, t)^T \, \d s, \\
                                  &= 2 \sigma  \lim_{t \to \infty} \int_{0}^{t} \mat U(t - s, t) \, \mat C(t - s) \, \mat U(t - s, t)^T \, \d s, \\
                                  &= 2 \sigma \lim_{t \to \infty} \int_{0}^{\infty} \mat U(t - s, t) \, \mat C(t - s) \, \mat U(t - s, t)^T \, \one_{\{\dummy \leq t\}}(s) \,  \d s, \\
                                  &= 2 \sigma  \lim_{t \to \infty} \int_{0}^{\infty} \e^{- \sigma s} \, \sigma \, \mat B \, \e^{- \sigma s}  \d s = \sigma \mat B \left(= \lim_{t \to \infty}  \mat C(t)\right),
\end{align*}
which holds for any initial condition $\mat \Sigma_0$.
\end{remark}

\begin{remark}
    A byproduct of \cref{lemma:fundamental_solution} is that
    the mean field equation~\eqref{eq:mean-field_equation} too propagates Gaussians
    when the forward model $G$ is linear,
    as was proved in~\cite[Proposition 4]{2019arXiv190308866G}.
\end{remark}

\section{Stability in Wasserstein distance}%
\label{sec:stability_on_wasserstein}

The aim of this section is to derive a stability property for both the linear Fokker--Planck equation~\eqref{eq:mean-field_equation_linear} (where $\mat C(t)$ is a given parameter)
and the nonlocal mean field equation~\eqref{eq:mean-field_equation} (where $\mathcal C(f_t)$ depends on the solution),
which we undertake in \cref{sub:stability_for_the_linear_fokker_planck_equation} and \cref{sub:stability_for_general_solutions}, respectively.

\subsection{Stability for the linear Fokker--Planck equation \texorpdfstring{\cref{eq:mean-field_equation_linear}}{}}%
\label{sub:stability_for_the_linear_fokker_planck_equation}

Throughout this subsection we consider that $\mat C(t)$ is a given solution of \cref{eq:closed_equation_second_moment} and $\mat U(0, t) = \mat U(0, t; \mat C)$.
For some probability measure $f$ over $\real^d$ and a mapping $T: \real^d \to \real^d$,
we will denote the pushforward measure by $T_\sharp f$.
We remind the reader that, if $f$ admits a density $\hat f$ with respect to the Lebesgue measure
and $\mat A \in \real^{d\times d}$ is a nonsingular matrix,
then $\mat A_{\sharp}f$ (identifying $\mat A$ with the associated linear mapping) has density $\frac{1}{\det(\mat A)} \, \hat f(\mat A^{-1} \dummy)$.
We show the following result.
\begin{proposition}
    [Convergence of solutions when the covariance is given]
    \label{proposition:convergence_over_manifold_same_covariance}%
    Let $f^1$ and $f^2$ be two solutions of \cref{eq:mean-field_equation_linear}
    associated with initial conditions $f^1_0$ and $f^2_0$, respectively.
    Then
    \begin{align}
        \label{eq:stability_same_manifold}%
        W_2(f^1_t, f^2_t) &\leq \sqrt{\eucnorm{\mat U(0, t)^T \mat U(0, t)}[2]} \, W_2(f_0^1, f_0^2),
    \end{align}
    Under the same assumptions as in \cref{lemma:moments_convergence_noise},
    it therefore holds, in view of \cref{eq:bound_covariance_u},
    \begin{align*}
        W_2(f^1_t, f^2_t) \leq \sqrt{m M} \, \left(\frac{\e^{-\sigma t}}{\sqrt{\alpha(t)}}\right) \, W_2(f_0^1, f_0^2).
    \end{align*}
\end{proposition}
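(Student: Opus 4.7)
The plan is to leverage the linearity of \cref{eq:mean-field_equation_linear} in $f$ together with \cref{lemma:fundamental_solution} to represent each solution $f_t^i$ as an affine pushforward of the initial datum convolved with a Gaussian that does not depend on the initial datum, and then to invoke two standard properties of the $W_2$ distance: pushforward by a linear map $A$ is $\eucnorm{A}[2]$-Lipschitz, and convolution with a common probability measure is $1$-Lipschitz.

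Concretely, I would set $T_t(\vect u) := \vect u_0 + \mat U(0,t)(\vect u - \vect u_0)$ and $\mat \Sigma(t) := 2\sigma \int_0^t \mat U(s,t) \mat C(s) \mat U(s,t)^T \, \d s$, i.e.\ the covariance in \cref{eq:fundamental_solution_second_moment} with $\mat \Sigma_0 = 0$. \Cref{lemma:fundamental_solution} applied to $g(\dummy; \vect v, \varepsilon \mat I)$ together with a weak limit $\varepsilon \to 0^+$ gives that the solution of \cref{eq:mean-field_equation_linear} issued from $\delta_{\vect v}$ is $g(\dummy; T_t(\vect v), \mat \Sigma(t))$. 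Linearity of the equation then yields, for an arbitrary initial datum $f_0$, the representation
\[
    f_t = (T_t)_\sharp f_0 * \mathcal N(0, \mat \Sigma(t)).
\]
Applying $T_t$ componentwise to an optimal coupling of $(f_0^1, f_0^2)$ gives the pushforward bound $W_2((T_t)_\sharp f_0^1, (T_t)_\sharp f_0^2) \leq \eucnorm{\mat U(0,t)}[2] \, W_2(f_0^1, f_0^2)$, and coupling the convolved Gaussian with itself gives $W_2(f_t^1, f_t^2) \leq W_2((T_t)_\sharp f_0^1, (T_t)_\sharp f_0^2)$. Combining these two bounds yields \cref{eq:stability_same_manifold}, since $\sqrt{\eucnorm{\mat U(0,t)^T \mat U(0,t)}[2]} = \eucnorm{\mat U(0,t)}[2]$. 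The quantitative tail of the statement then follows by inserting the estimate \cref{eq:bound_covariance_u} from \cref{lemma:bound_for_the_fundamental_matrix} under the assumptions of \cref{lemma:moments_convergence_noise}.

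The main obstacle is justifying the convolution-pushforward decomposition beyond the class of Gaussians (or finite Gaussian mixtures) covered directly by \cref{lemma:fundamental_solution}, which requires either a density argument combined with weak continuity of the solution map, or some uniqueness statement for \cref{eq:mean-field_equation_linear}. A stochastic alternative avoids this entirely: \cref{eq:mean-field_equation_linear} is the Fokker--Planck equation of the linear SDE $\d \vect X_t = -\mat C(t) \mat B^{-1}(\vect X_t - \vect u_0)\, \d t + \sqrt{2\sigma \mat C(t)}\, \d \vect W_t$, whose explicit solution reads $\vect X_t = T_t(\vect X_0) + \int_0^t \mat U(s,t) \sqrt{2\sigma \mat C(s)}\, \d \vect W_s$, and driving two copies by the same Brownian motion with an optimal coupling of the initial data gives directly $\vect X_t^1 - \vect X_t^2 = \mat U(0,t)(\vect X_0^1 - \vect X_0^2)$, from which \cref{eq:stability_same_manifold} is immediate by taking expectations.
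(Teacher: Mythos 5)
Your proposal is correct and coincides with the paper's argument: the paper also derives the representation $f_t = \bigl(\text{affine pushforward of } f_0\bigr) * \mathcal N(\vect u_0, \mat \Sigma(t))$ from \cref{lemma:fundamental_solution}, then applies the convexity (convolution) property of $W_2$ together with \cref{lemma:wasserstein_linear}. Your probabilistic alternative via the linear SDE and synchronous coupling is also presented in the paper, as a remark immediately following the proposition.
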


To prove \cref{proposition:convergence_over_manifold_same_covariance} we will need the following lemma.
\begin{lemma}
    [Influence of linear transformations on the Wasserstein distance]
    \label{lemma:wasserstein_linear}%
    Let $\mat A \in \real^{d \times d}$ be nonsingular
    and let us consider two probability measures with finite second moment, $f, g \in \mathcal P_2(\real^d)$.
    Then also $\mat A_\sharp f, \mat A_\sharp g \in \mathcal P_2(\real^d)$ and
    \[
        W_2\left(\mat A_\sharp f, \mat A_\sharp g\right) \leq \sqrt{\eucnorm{\mat A^T \mat A}[2]} W_2(f, g).
    \]
\end{lemma}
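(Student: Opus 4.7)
The plan is to work directly from the Kantorovich formulation of the Wasserstein distance
\[
    W_2(f, g)^2 = \inf_{\pi \in \Pi(f, g)} \int_{\real^d \times \real^d} \eucnorm{\vect x - \vect y}^2 \, \d \pi(\vect x, \vect y),
\]
where $\Pi(f, g)$ denotes the set of couplings of $f$ and $g$. The key observation is that pushforward by $\mat A$ interacts well with couplings: if $\pi \in \Pi(f, g)$, then the pushforward of $\pi$ under the product map $(\vect x, \vect y) \mapsto (\mat A \vect x, \mat A \vect y)$, which I will denote $(\mat A \otimes \mat A)_\sharp \pi$, is a coupling of $\mat A_\sharp f$ and $\mat A_\sharp g$.

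Before carrying out the main estimate, I would verify that $\mat A_\sharp f \in \mathcal P_2(\real^d)$ (and likewise for $g$) by a direct change of variables: $\int \eucnorm{\vect x}^2 \, \d (\mat A_\sharp f)(\vect x) = \int \eucnorm{\mat A \vect x}^2 \, \d f(\vect x) \leq \eucnorm{\mat A^T \mat A}[2] \int \eucnorm{\vect x}^2 \, \d f(\vect x) < \infty$, using the standard bound $\eucnorm{\mat A \vect v}^2 = \vect v^T \mat A^T \mat A \vect v \leq \eucnorm{\mat A^T \mat A}[2] \eucnorm{\vect v}^2$ for all $\vect v \in \real^d$.

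The main step is then to pick any coupling $\pi \in \Pi(f, g)$, bound
\[
    W_2(\mat A_\sharp f, \mat A_\sharp g)^2 \leq \int_{\real^d \times \real^d} \eucnorm{\vect x - \vect y}^2 \, \d \bigl((\mat A \otimes \mat A)_\sharp \pi\bigr)(\vect x, \vect y) = \int_{\real^d \times \real^d} \eucnorm{\mat A(\vect x - \vect y)}^2 \, \d \pi(\vect x, \vect y),
\]
and then apply the same operator-norm inequality as above to the integrand, yielding
\[
    W_2(\mat A_\sharp f, \mat A_\sharp g)^2 \leq \eucnorm{\mat A^T \mat A}[2] \int_{\real^d \times \real^d} \eucnorm{\vect x - \vect y}^2 \, \d \pi(\vect x, \vect y).
\]
Taking the infimum over $\pi \in \Pi(f, g)$ on the right-hand side and a square root on both sides delivers the claimed bound. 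The proof is essentially one computation; there is no genuine obstacle, since the nonsingularity of $\mat A$ is not even needed for the inequality itself (it is only relevant for interpreting $\mat A_\sharp$ at the level of densities as was done in the paragraph preceding the lemma). The only point to handle carefully is to justify that $(\mat A \otimes \mat A)_\sharp \pi$ has the correct marginals, which follows from the definition of pushforward: for any Borel set $B \subset \real^d$,
\[
    \bigl((\mat A \otimes \mat A)_\sharp \pi\bigr)(B \times \real^d) = \pi\bigl(\mat A^{-1}(B) \times \real^d\bigr) = f(\mat A^{-1}(B)) = (\mat A_\sharp f)(B),
\]
and symmetrically for the second marginal.
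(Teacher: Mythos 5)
Your proof is correct and follows essentially the same route as the paper: push a coupling forward by $(\vect x,\vect y)\mapsto(\mat A\vect x,\mat A\vect y)$, check the marginals, and bound the integrand by $\eucnorm{\mat A^T\mat A}[2]\,\eucnorm{\vect x-\vect y}^2$. The only (cosmetic) difference is that you take the infimum over all couplings rather than fixing an optimal plan from the outset, which lets you sidestep the existence of an optimizer; you also spell out the $\mathcal P_2$-membership and marginal checks slightly more carefully than the paper does.
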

\begin{proof}
    Let $\gamma_{o}$ be an optimal transference plan
    (by~\cite[Proposition 2.1]{MR2355628}, the infimum in the definition of the Wasserstein distance is achieved)
    such that
    \[
        \int \!\!\! \int_{\real^{d} \times \real^d} \eucnorm{x - y}^2 \, \gamma_{o}(\d x \, \d y) = {W_2(f, g)}^2,
    \]
    and consider the map $r: (x, y) \mapsto (\mat A x, \mat A y)$.
    The pushforward plan $r_\sharp \gamma_{o}$ has the correct marginals:
    looking for example at the $x$ marginal, we calculate that for all $\varphi \in C_b(\real^d)$,
    \begin{align*}
        \int \!\!\! \int_{\real^{d} \times \real^d} \varphi(x) \, r_\sharp \gamma_{o} (\d x \, \d y)
        &= \int \!\!\! \int_{\real^{d} \times \real^d} \varphi(\mat A x) \, \gamma_{o} (\d x \, \d y) \\ 
        &= \int_{\real^{d}} \varphi(\mat A x) \, f(\d x) 
        = \int_{\real^d} \varphi(x) \, \mat A_\sharp f(\d x). 
    \end{align*}
    Furthermore, by a change of variable,
    \begin{align*}
        \int \!\!\! \int_{\real^{d} \times \real^d} \eucnorm{x - y}^2 \, r_\sharp \gamma_{o}(\d x \, \d y) & = \int \!\!\! \int_{\real^{d} \times \real^d} \eucnorm{\mat A x - \mat A y}^2 \, \gamma_{o}(\d x \, \d y) \\
                                                                                                             & \leq \eucnorm{\mat A^T \mat A}[2] \int \!\!\! \int_{\real^{d} \times \real^d} \eucnorm{x - y}^2 \, \gamma_{o}(\d x \, \d y) = \eucnorm{\mat A^T \mat A}[2] \, {W_2(f, g)}^2.
    \end{align*}
    We notice that orthogonal transformations do not influence the Wasserstein distance.
\end{proof}

\begin{proof}[Proof of \cref{proposition:convergence_over_manifold_same_covariance}]
Let us denote by $\zeta(\vect u, t; \vect v)$ the fundamental solution provided by \cref{lemma:fundamental_solution}.
By linearity, the solution of \cref{eq:mean-field_equation_linear} associated with initial condition $f_0$ can be expressed as follows:
\begin{align*}
    f(\vect u, t) &= \int_{\real^{d}} f_0(\vect v) \, \zeta(\vect u, t; \vect v) \, \d \vect v \\
            &= \int_{\real^{d}} f_0(\vect v) \, g\left(\vect u; \vect u_0 + \mat U(0, t)(\vect v - \vect u_0), \mat \Sigma(t) \right) \, \d \vect v.
\end{align*}
By the change of variables $\vect v \mapsto \mat U(0, t) (\vect v - \vect u_0) =: \vect w(\vect v)$,
we can rewrite this integral as
\begin{align}
    \label{eq:intermediate_expression_convolution}%
    f(\vect u, t) &= \int_{\real^{d}} \frac{f_0\left(\mat U(0, t)^{-1}\vect w + \vect u_0\right)}{\det (\mat U(0, t))} \, g(\vect u; \vect u_0 + \vect w, \mat \Sigma(t)) \, \d \vect w \\
    \notag%
                  &= \left( \frac{f_0\left(\mat U(0, t)^{-1} \dummy + \vect u_0\right)}{\det (\mat U(0, t))} \star g(\, \dummy \,; \vect u_0, \mat \Sigma(t)) \right) (\vect u).
\end{align}
By the convexity property of the Wasserstein distance~\cite[Proposition 2.1]{MR2355628},
its invariance under translation
and \cref{lemma:wasserstein_linear},
we obtain
\begin{align*}
    W_2(f^1_t, f^2_t)
    &\leq W_2\left( \frac{f_0^1\left(\mat U(0,t)^{-1}\vect w + \vect u_0\right)}{\det (\mat U(0, t))}, \frac{f_0^2\left(\mat U(0,t)^{-1}\vect w + \vect u_0\right)}{\det (\mat U(0, t))}\right) \\
    &\leq \sqrt{\eucnorm{\mat U(0,t)^T \mat U(0,t)}[2]} \, W_2(f_0^1, f_0^2),
\end{align*}
which is the desired inequality.
\end{proof}

\begin{remark}
\Cref{proposition:convergence_over_manifold_same_covariance} can also be proved via a purely probabilistic approach,
employing the approach presented e.g. in~\cite{MR2964689,MR2459454}.
Indeed a solution of \cref{eq:mean-field_equation_linear} with initial condition $f_0$ can be viewed,
by It\^o's formula,
as the law of the process $(X_t)_{t\geq0}$ that solves the stochastic differential equation (SDE)
\[
    \d \vect X_t = - \mat C(t) \, \mat B^{-1} \, (\vect X_t - \vect u_0) \, \d t + \sqrt{2 \, \sigma \, \mat C(t)} \, \d W_t, \qquad \vect X_0 \sim f_0,
\]
where $W$ is a standard Wiener process on $\real^d$.
Considering two solutions $\vect X_t$ and $\vect Y_t$ associated with
the initial conditions $\vect X_0 \sim f^1_0$ and $\vect Y_0 \sim f^2_0$ (and with the same Wiener process),
we calculate
\[
    \d \vect X_t - \d \vect Y_t = - \mat C(t) \mat B^{-1} (\mat X_t - \mat Y_t) \, \d t,
\]
and therefore
\[
    \vect X_t - \vect Y_t = \mat U(0, t) (\vect X_0 - \vect Y_0),
\]
which implies
\begin{equation}
\label{eq:intermediate_equation_proba_approach}%
\eucnorm{\vect X_t - \vect Y_t}^2 \leq \eucnorm{\mat U(0, t)^T \mat U(0, t)}[2] \, \eucnorm{\vect X_0 - \vect Y_0}^2.
\end{equation}
Recalling that the Wasserstein distance can equivalently be defined as
\[
    W_2(\rho_1, \rho_2) = \left(\inf_{\vect X, \vect Y}  \expect \eucnorm{\vect X - \vect Y}^2 \right)^{1/2},
\]
where the infimum is over all $\vect X$ and $\vect Y$ with laws $\rho_1$ and $\rho_2$, respectively,
and taking the expectation of both sides of \cref{eq:intermediate_equation_proba_approach},
we obtain
\[
    W_2(f^1_t, f^2_t) \leq \eucnorm{\mat U(0, t)^T \mat U(0, t)}[2] \, \expect \eucnorm{\vect X_0 - \vect Y_0}^2.
\]
Infimizing over all $\vect X_0$ and $\vect Y_0$ with laws $f^1_0$ and $f^2_0$, respectively,
we obtain precisely \cref{eq:stability_same_manifold}.
\end{remark}
We remark that the first moment of $f^1$ and $f^2$ need not coincide for \cref{proposition:convergence_over_manifold_same_covariance} to hold.

\subsection{Stability for the nonlocal mean field equation}%
\label{sub:stability_for_general_solutions}
To prepare the terrain for the derivation of our main result,
we begin by showing a stability property on the set of Gaussian solutions.
To this end, we will employ the following bound for the distance between the square root of the covariant matrices associated with two solutions.
\begin{lemma}
    [Convergence of the square root of the covariance matrix]
    \label{lemma:moments_square_root}%
    Under the assumptions of \cref{lemma:moments_convergence_noise},
    it holds that
    \begin{align}
        \label{eq:decrease_covariance_square_root}%
        & \eucnorm{\mat C_1(t)^{1/2} - \mat C_2(t)^{1/2}}[F]  \leq C_R M \, m \, \eucnorm{\mat C_1(0)^{1/2} - \mat C_2(0)^{1/2}}[F]
        \frac{\e^{-\sigma t}}{\alpha(t)}
    \end{align}
    where $C_R$ is a constant that depends only on the dimension of the problem.
\end{lemma}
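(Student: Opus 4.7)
The plan is to reduce the estimate on $\eucnorm{\mat C_1(t)^{1/2} - \mat C_2(t)^{1/2}}[F]$ to the bound on $\eucnorm{\mat C_1(t) - \mat C_2(t)}[F]$ already provided by \cref{eq:decrease_covariance}. Writing $\mat X_i(t) := \mat C_i(t)^{1/2}$ and $\mat D(t) := \mat X_1(t) - \mat X_2(t)$, the algebraic identity $\mat X_1 \mat D + \mat D \mat X_2 = \mat X_1^2 - \mat X_2^2$ shows that $\mat D(t)$ solves the Sylvester-type equation
\[
    \mat X_1(t) \, \mat D(t) + \mat D(t) \, \mat X_2(t) = \mat C_1(t) - \mat C_2(t).
\]
Diagonalising $\mat X_1(t)$ and $\mat X_2(t)$ in their respective orthonormal bases, the linear operator $\mat D \mapsto \mat X_1 \mat D + \mat D \mat X_2$ acts componentwise with eigenvalues $\lambda_i(\mat X_1(t)) + \lambda_j(\mat X_2(t))$ with respect to the Frobenius inner product, so its inversion yields
\[
    \eucnorm{\mat D(t)}[F] \leq \frac{1}{\lambda_{\min}(\mat X_1(t)) + \lambda_{\min}(\mat X_2(t))} \, \eucnorm{\mat C_1(t) - \mat C_2(t)}[F].
\]

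Next I would establish a uniform lower bound on $\lambda_{\min}(\mat X_i(t))$. By \cref{eq:convex_combination}, $\alpha(t)^{-1} \mat C_i(t)^{-1}$ is a convex combination of $\mat B^{-1}$ and $\mat C_i(0)^{-1}$, both of operator norm at most $m$; hence $\eucnorm{\mat C_i(t)^{-1}}[2] \leq m \, \alpha(t)$ and therefore $\lambda_{\min}(\mat X_i(t)) \geq (m\,\alpha(t))^{-1/2}$. Plugging in \cref{eq:decrease_covariance} gives
\[
    \eucnorm{\mat D(t)}[F] \leq \frac{1}{2}\sqrt{m\,\alpha(t)} \cdot M^2 m^2 \, \eucnorm{\mat C_1(0) - \mat C_2(0)}[F] \, \frac{\e^{-2\sigma t}}{\alpha(t)^2}.
\]
The same algebraic identity at $t = 0$, read in the forward direction, gives
\[
    \eucnorm{\mat C_1(0) - \mat C_2(0)}[F] \leq \bigl(\eucnorm{\mat X_1(0)}[2] + \eucnorm{\mat X_2(0)}[2]\bigr) \, \eucnorm{\mat X_1(0) - \mat X_2(0)}[F] \leq 2\sqrt{M} \, \eucnorm{\mat C_1(0)^{1/2} - \mat C_2(0)^{1/2}}[F],
\]
so the overall bound is proportional to $M^{5/2} m^{5/2} \, \e^{-2\sigma t}/\alpha(t)^{3/2}$. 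Since \cref{eq:definition_alpha} gives $\e^{-\sigma t} \leq \sqrt{\alpha(t)}$ for all $\sigma \geq 0$ and $t \geq 0$, this decay majorises the claimed rate $\e^{-\sigma t}/\alpha(t)$.

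The main obstacle is the sharp control of the inverse Sylvester operator, which degenerates as either $\mat X_i(t)$ approaches singular; the approach above handles this via the uniform lower bound $\lambda_{\min}(\mat X_i(t)) \geq (m\,\alpha(t))^{-1/2}$ extracted from the explicit representation \cref{eq:convex_combination}. The dimension-dependent constant $C_R$ in the stated form is presumably introduced to simplify the dependence on $m$ and $M$, likely via the passage $\eucnorm{\dummy}[2] \leq \eucnorm{\dummy}[F] \leq \sqrt{d} \, \eucnorm{\dummy}[2]$ between the two matrix norms at one of the intermediate estimates.
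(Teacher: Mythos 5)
Your proof is correct and takes a genuinely different route from the paper. You reduce the square-root estimate to the already established bound~\eqref{eq:decrease_covariance} on $\eucnorm{\mat C_1(t)-\mat C_2(t)}[F]$ by inverting the Sylvester operator $\mat D\mapsto \mat X_1\mat D + \mat D\mat X_2$, whose inverse has Frobenius-to-Frobenius operator norm exactly $\bigl(\lambda_{\min}(\mat X_1)+\lambda_{\min}(\mat X_2)\bigr)^{-1}$, and you control this factor via the explicit convex-combination representation~\eqref{eq:convex_combination}. The paper instead works directly on $\mat C_i(t)^{-1/2}$ through the factorization $\mat X_1 - \mat X_2 = \mat X_1(\mat X_2^{-1}-\mat X_1^{-1})\mat X_2$ and invokes the dedicated ``concavity'' inequality of \cref{lemma:technical_result_concavity_matrix_square_root}, which is where the dimension-dependent constant $C_R$ actually enters (through the equivalence-of-norms step $C_1\eucnorm{\cdot}[F]\le d(\cdot,\cdot)\le C_2\eucnorm{\cdot}[F]$ used there), not to simplify the $m,M$ dependence as you conjecture.

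The trade-off is worth noting. Your argument is entirely elementary and self-contained --- it does not need \cref{lemma:technical_result_concavity_matrix_square_root} at all, and the constant in front of the Frobenius norm is dimension-free. On the other hand, the chain $\eucnorm{\mat D(t)}[F]\le\tfrac12\sqrt{m\alpha(t)}\cdot M^2m^2\cdot 2\sqrt{M}\,\eucnorm{\mat D(0)}[F]\,\e^{-2\sigma t}/\alpha(t)^2$ delivers the prefactor $(mM)^{5/2}$ rather than $C_R\,mM$; since $mM\ge\eucnorm{\mat B}[2]\eucnorm{\mat B^{-1}}[2]\ge 1$, this is a genuinely larger constant, so strictly speaking you have proved a slightly weaker form of~\eqref{eq:decrease_covariance_square_root}. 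The extra powers of $mM$ come from passing through $\eucnorm{\mat C_1-\mat C_2}[F]$ twice (once at $t$, once at $0$), each passage costing a factor of roughly $\sqrt{mM}$. This does no harm downstream --- \cref{proposition:stability_wasserstein_gaussian,proposition:general_contraction_property} would simply acquire slightly different polynomial exponents in $m,M$ --- but it means your statement and the lemma's are not identical. Your time-decay $\e^{-2\sigma t}/\alpha(t)^{3/2}$ is in fact faster than the claimed $\e^{-\sigma t}/\alpha(t)$, which you correctly recognise and discard using $\e^{-2\sigma t}\le\alpha(t)$; so you have stronger $t$-decay and weaker $m,M$-dependence.
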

\begin{proof}
    We restrict ourselves in the proof to the case $\sigma > 0$ for simplicity.
    Employing the same reasoning as in the first part of the proof of \cref{lemma:moments_convergence_noise},
    we write
    \[
        \eucnorm{\mat C_1(t)^{1/2} - \mat C_2(t)^{1/2}}[F] = \eucnorm{\mat C_1(t)^{1/2}}[2] \, \eucnorm{\mat C_1(t)^{-1/2} - \mat C_2(t)^{-1/2}}[F] \, \eucnorm{\mat C_2(t)^{1/2}}[2],
    \]
    The middle term can be written as
    \[
        \eucnorm{\mat C_1(t)^{-1/2} - \mat C_2(t)^{-1/2}}[F] = \eucnorm{(\mat M + \mat M_1)^{1/2} - (\mat M + \mat M_2)^{1/2}}[F],
    \]
    where $\mat M = (1 - \e^{-2 \sigma t}) \mat B^{-1}/\sigma$ and $\mat M_i = \e^{-2 \sigma t} \mat C_i(0)^{-1}$, for $i = 1, 2$.
    Therefore, using the technical bound presented in \cref{lemma:technical_result_concavity_matrix_square_root} below,
    \begin{align*}
        \eucnorm{\mat C_1(t)^{-1/2} - \mat C_2(t)^{-1/2}}[F] \leq C_R \eucnorm{\mat M_1^{1/2} - \mat M_2^{1/2}}[F] &= C_R \, \e^{-\sigma t} \eucnorm{\mat C_1(0)^{-1/2} - \mat C_2(0)^{-1/2}}[F] \\
        &\leq C_R \, m \, \e^{-\sigma t} \,\eucnorm{\mat C_1(0)^{1/2} - \mat C_2(0)^{1/2}}[F],
    \end{align*}
    which leads to \cref{eq:decrease_covariance_square_root} after employing the convex decomposition~\eqref{eq:convex_combination} to bound $\eucnorm*{\mat C_i^{1/2}}[2]$.
\end{proof}

The Wasserstein distance between two Gaussian measures admits an explicit expression,
which we recall in the following lemma.
\begin{lemma}
    [Wasserstein distance between Gaussians]
    \label{lemma:wasserstein_distance_between_gaussians}%
    Consider two Gaussians probability measures $\mathcal N(\vect \mu_1, \mat \Sigma_1)$ and $\mathcal N(\vect \mu_2, \mat \Sigma_2)$ on $\real^d$.
    The Wasserstein distance between them is given by
    \begin{equation}
        \label{eq:exact_distance_gaussians}%
        \abs{W_2\left(\mathcal N(\vect \mu_1, \mat \Sigma_2), \mathcal N(\vect \mu_2, \mat \Sigma_2)\right)}^2
        = \eucnorm{\vect \mu_1 - \vect \mu_2}^2 + \trace \left(\mat \Sigma_1 + \mat \Sigma_2 - 2(\mat \Sigma_1^{1/2} \mat \Sigma_2 \mat \Sigma_1^{1/2})^{1/2}\right).
    \end{equation}
\end{lemma}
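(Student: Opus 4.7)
The plan is to first reduce to the case of centered Gaussians by exploiting the translation invariance of the Wasserstein distance in the covariance part. Indeed, if $\tau_{\vect a}(\vect x) = \vect x + \vect a$, then $W_2^2(\tau_{\vect a \sharp} f, \tau_{\vect b \sharp} g) = W_2^2(f, g) + \eucnorm{\vect a - \vect b}^2 + 2 \ip{\vect a - \vect b}{\mathcal M(f) - \mathcal M(g)}$, and applying this with $\vect a = \vect \mu_1$, $\vect b = \vect \mu_2$, $f = \mathcal N(0, \mat \Sigma_1)$, $g = \mathcal N(0, \mat \Sigma_2)$ isolates the $\eucnorm{\vect \mu_1 - \vect \mu_2}^2$ contribution cleanly. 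It thus suffices to show that $W_2^2(\mathcal N(0, \mat \Sigma_1), \mathcal N(0, \mat \Sigma_2)) = \trace(\mat \Sigma_1 + \mat \Sigma_2 - 2(\mat \Sigma_1^{1/2} \mat \Sigma_2 \mat \Sigma_1^{1/2})^{1/2})$.

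For the centered case, I would proceed by exhibiting an explicit optimal transport map and then argue optimality through Brenier's theorem. Define the linear map
\[
    \mat T := \mat \Sigma_1^{-1/2} \bigl(\mat \Sigma_1^{1/2} \mat \Sigma_2 \mat \Sigma_1^{1/2}\bigr)^{1/2} \mat \Sigma_1^{-1/2},
\]
(assuming $\mat \Sigma_1$ is nonsingular; the degenerate case follows by a limiting argument with $\mat \Sigma_1 + \varepsilon \mat I$). A direct verification using $\mat T \mat \Sigma_1 \mat T = \mat \Sigma_2$ shows that $\mat T_\sharp \mathcal N(0, \mat \Sigma_1) = \mathcal N(0, \mat \Sigma_2)$. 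Moreover, $\mat T$ is symmetric positive definite (being a product of the form $\mat A^{-1/2}(\mat A^{1/2} \mat B \mat A^{1/2})^{1/2} \mat A^{-1/2}$ with $\mat A, \mat B \succ 0$), so $\mat T = \grad \varphi$ for the convex quadratic $\varphi(\vect x) = \frac{1}{2} \vect x^T \mat T \vect x$. By Brenier's theorem, $\mat T$ is therefore the optimal transport map, and the $L^2$ transportation cost is
\[
    \int_{\real^d} \eucnorm{\vect x - \mat T \vect x}^2 \, g(\vect x; 0, \mat \Sigma_1) \, \d \vect x = \trace\bigl(\mat \Sigma_1 + \mat T \mat \Sigma_1 \mat T - 2 \mat T \mat \Sigma_1\bigr).
\]
Using $\mat T \mat \Sigma_1 \mat T = \mat \Sigma_2$ and the cyclic property of the trace together with the identity $\trace(\mat T \mat \Sigma_1) = \trace(\mat \Sigma_1^{1/2} \mat T \mat \Sigma_1^{1/2}) = \trace((\mat \Sigma_1^{1/2} \mat \Sigma_2 \mat \Sigma_1^{1/2})^{1/2})$, one recovers exactly the trace expression in \cref{eq:exact_distance_gaussians}.

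The main technical obstacle is verifying that $\mat T$ is genuinely symmetric positive definite, since this is what allows the appeal to Brenier's theorem and hence to optimality. This reduces to observing that $\mat \Sigma_1^{1/2} \mat T \mat \Sigma_1^{1/2} = (\mat \Sigma_1^{1/2} \mat \Sigma_2 \mat \Sigma_1^{1/2})^{1/2}$ is symmetric positive definite, and that the conjugation by $\mat \Sigma_1^{-1/2}$ preserves these properties. An alternative route, avoiding Brenier entirely, would be to restrict the infimum defining $W_2$ to jointly Gaussian couplings $(\vect X, \vect Y)$ with prescribed marginals, parametrize by the cross-covariance $\mat K = \expect[\vect X \vect Y^T]$, and minimize $\trace(\mat \Sigma_1 + \mat \Sigma_2 - 2 \mat K)$ over all $\mat K$ such that $\begin{pmatrix} \mat \Sigma_1 & \mat K \\ \mat K^T & \mat \Sigma_2 \end{pmatrix} \succeq 0$; a singular-value-decomposition argument then yields the same formula, and optimality among all couplings follows because the inequality $W_2^2 \geq \trace(\mat \Sigma_1 + \mat \Sigma_2) - 2 \trace((\mat \Sigma_1^{1/2} \mat \Sigma_2 \mat \Sigma_1^{1/2})^{1/2})$ holds for any coupling by Cauchy--Schwarz applied to $\expect[\vect X^T \vect Y]$.
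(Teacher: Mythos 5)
Your proof is correct, but it takes a genuinely different route from the paper. The paper parametrizes couplings by their cross-covariance $\mat X$, reduces the problem to minimizing $\trace(\mat \Sigma_1 + \mat \Sigma_2 - 2\mat X)$ subject to the Schur-complement constraint $\mat \Sigma_2 - \mat X^T \mat \Sigma_1^{-1}\mat X \succeq 0$, and solves this by a polar-decomposition and singular-value argument; optimality is then confirmed by exhibiting the same transport map you wrote down. Your primary route instead goes directly through Brenier's theorem: you check that $\mat T = \mat \Sigma_1^{-1/2}(\mat \Sigma_1^{1/2}\mat \Sigma_2\mat \Sigma_1^{1/2})^{1/2}\mat \Sigma_1^{-1/2}$ pushes $\mathcal N(0,\mat \Sigma_1)$ to $\mathcal N(0,\mat \Sigma_2)$, observe that it is the gradient of the convex quadratic $\tfrac12\vect x^T\mat T\vect x$ because $\mat T$ is symmetric positive definite, and conclude optimality at once. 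That is cleaner and avoids the SVD bookkeeping, but it is worth noting what the paper's longer route buys: it produces the intermediate inequality \eqref{eq:general_lowerbound_wasserstein} as a bound valid for \emph{arbitrary} (not necessarily Gaussian) measures with given first two moments, which the paper explicitly records in \cref{remark:lower_bound_wasserstein_general} and uses again in the proof of \cref{proposition:general_contraction_property}. Your ``alternative route'' at the end is essentially the paper's argument, so you did identify it; only a small caveat there: the justification that the Gaussian-coupling minimum bounds all couplings is really the Schur-complement/positive-semidefiniteness of the joint covariance rather than a direct application of Cauchy--Schwarz to $\expect[\vect X^T\vect Y]$, and the passage from $\min_{\mat K}$ to the explicit square-root trace does require the polar/SVD step, which should not be elided. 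Finally, your initial translation-invariance identity for $W_2^2$ and the reduction to $\vect\mu_1=\vect\mu_2=0$ is correct and a nice simplification the paper does not spell out.
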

\begin{proof}
    \Cref{eq:exact_distance_gaussians} is proved in~\cite{MR752258},
    but we will include a sketch of the proof in the simpler case
    where $\mat \Sigma_1, \mat \Sigma_2 \succ 0$ (the proof of the general case requires an additional step)
    for the reader's convenience and because we will employ the intermediate inequality~\eqref{eq:general_lowerbound_wasserstein} below.
    We will see that, by taking an appropriate singular value decomposition,
    the proof presented in the aforementioned paper can be slightly simplified.
    The key idea is to notice that the covariance matrix of the optimal transference plan
    (a probability measure on $\real^d \times \real^d$) must have the form
    \[
        \mat \Sigma =
        \begin{pmatrix}
            \mat \Sigma_1 & \mat X \\
            \mat X^T & \mat \Sigma_2
        \end{pmatrix},
    \]
    and that the Wasserstein distance is given by $\eucnorm{\vect \mu_1 - \vect \mu_2}^2 + \trace(\mat \Sigma_1 + \mat \Sigma_2 - 2 \mat X)$.
    Using Schur's complement,
    and denoting the squared Wasserstein distance on the left-hand side of \cref{eq:exact_distance_gaussians} by $W^2$ for short,
    we deduce
    \[
    W^2 \geq \eucnorm{\vect \mu_2 - \vect \mu_1}^2 + \min_{\mat X} \trace(\mat \Sigma_1 + \mat \Sigma_2 - 2 \mat X) \qquad \text{subject to }\mat \Sigma_2 - \mat X^T \mat \Sigma_1^{-1} \mat X \succeq 0.
    \]
    (The infimum is attained because the admissible set is compact.)
    By polar decomposition of $\mat \Sigma_1^{-1/2} \mat X$,
    it is possible to write $\mat X = \mat \Sigma_1^{1/2} \mat Q \mat S^{1/2}$,
    for an orthogonal matrix $\mat Q$ and a symmetric positive-semidefinite matrix $\mat S^{1/2}$.
    Since $\mat Q$ does not appear in the constraint,
    and since $\trace(\mat X) = \trace(\mat Q \mat S^{1/2} \mat \Sigma_1^{1/2}) = \trace(\mat Q \mat V_1 \mat D \mat V_2^T) = \trace(\mat V_2^T \mat Q \mat V_1 \mat D)$,
    where $\mat V_1^T \mat D \mat V_2$ is the singular value decomposition of $\mat S^{1/2} \mat \Sigma_1^{1/2}$,
    is clearly maximized when $\mat V_2^T \mat Q \mat V_1 = \mat I$ with maximal value $\trace(\mat D)$,
    we deduce
    \begin{align*}
        W^2
        & \geq \eucnorm{\vect \mu_2 - \vect \mu_1}^2 + \trace(\mat \Sigma_1 + \mat \Sigma_2)
        - 2 \max_{\mat S} \trace \left((\mat \Sigma_1^{1/2} \mat S \mat \Sigma_1^{1/2})^{1/2}\right) \mat  \qquad \text{subject to }\mat \Sigma_2 - \mat S \succeq 0,
    \end{align*}
    where the maximum is taken over all symmetric positive-semidefinite matrices.
    Here we employed that $\trace (\mat D) = \trace((\mat V_2^T \mat D^2 \mat V_2)^{1/2}) = \trace((\mat \Sigma_1^{1/2} \mat S \mat \Sigma_1^{1/2})^{1/2})$.
    Since the matrix square root is monotonous over the cone of positive semi-definite matrices,
    and since clearly $\mat \Sigma_1^{1/2} \mat S \mat \Sigma_1^{1/2} \preceq \mat \Sigma_1^{1/2} \mat \Sigma_2 \mat \Sigma_1^{1/2}$  on the set of admissible $\mat S$
    (that is, congruence preserves the order~$\preceq$),
    we conclude that the optimum is reached when $\mat S = \mat \Sigma_2$,
    which leads to
    \begin{equation}
        \label{eq:general_lowerbound_wasserstein}%
        W^2 \geq \eucnorm{\vect \mu_2 - \vect \mu_1}^2 + \trace \left(\mat \Sigma_1 + \mat \Sigma_2 - 2(\mat \Sigma_1^{1/2} \mat \Sigma_2 \mat \Sigma_1^{1/2})^{1/2}\right).
    \end{equation}
    Considering the following transportation map,
    \[
        T: x \mapsto \vect \mu_2 + \mat \Sigma_1^{-1/2} \, (\mat \Sigma_1^{1/2} \mat \Sigma_2 \mat \Sigma_1^{1/2} )^{1/2} \, \mat \Sigma_1^{-1/2} (x - \vect \mu_1),
    \]
    we notice that the lower bound is in fact attained for Gaussian densities.
    Indeed, it is simple to check that $T_{\#} \mathcal N(\vect \mu_1, \mat \Sigma_1) = \mathcal N(\vect \mu_2, \mat \Sigma_2)$
    and, by a change of variable,
    \begin{align*}
        \int \eucnorm{x - Tx}^2 \, g_{\vect \mu_1, \mat \Sigma_1}(x) \, \d x &= \int \eucnorm{\vect \mu_1 - \vect \mu_2 + x - \mat \Sigma_1^{-1/2} \, (\mat \Sigma_1^{1/2} \mat \Sigma_2 \mat \Sigma_1^{1/2} )^{1/2} \, \mat \Sigma_1^{-1/2} x}^2 \, g_{\vect 0, \mat \Sigma_1}(x) \, \d x, \\
                                                                   &= \eucnorm{\vect \mu_1 - \vect \mu_2}^2 + \trace (\mat \Sigma_1) + \trace (\mat \Sigma_2) - 2 \trace \left((\mat \Sigma_1^{1/2} \mat \Sigma_2 \mat \Sigma_1^{1/2})^{1/2}\right),
    \end{align*}
    where we employed the notation $g_{\vect \mu, \mat \Sigma} = g(\dummy, \vect \mu, \mat \Sigma)$ for short.
\end{proof}
\begin{remark}
\label{remark:lower_bound_wasserstein_general}%
We remark that \cref{eq:general_lowerbound_wasserstein} is in fact true for any probability measures with positive-definite covariance matrices,
as Gaussianity had not entered the proof at that point.
It is possible to show that this inequality too holds for degenerate covariant matrices,
see~\cite[Theorem 2.1]{MR1127323}.
\end{remark}

\begin{lemma}
    [Bounds on the Wasserstein distance between Gaussians]\label{lemaux}
    Consider two Gaussians $\mathcal N(\vect \mu_1, \mat \Sigma_1)$ and $\mathcal N(\vect \mu_2, \mat \Sigma_2)$.
    Denoting the Wasserstein distance between them by $W$ for convenience,
    it holds
    \begin{equation}
        \label{eq:bound_distance_gaussians}%
        \frac{1}{2}\eucnorm{\mat \Sigma_1^{1/2} - \mat \Sigma_2^{1/2}}_F^2 \leq W^2 - \abs{\vect \mu_2  - \vect \mu_1}^2 \leq \eucnorm{\mat \Sigma_1^{1/2} - \mat \Sigma_2^{1/2}}_F^2.
    \end{equation}
\end{lemma}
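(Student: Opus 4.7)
The plan is to derive both bounds directly from the closed-form expression \eqref{eq:exact_distance_gaussians} provided by \cref{lemma:wasserstein_distance_between_gaussians}. Introducing the shorthands $P := \mat \Sigma_1^{1/2}$ and $Q := \mat \Sigma_2^{1/2}$, both symmetric and positive semi-definite, one recognizes $\trace((\mat \Sigma_1^{1/2}\mat \Sigma_2\mat \Sigma_1^{1/2})^{1/2}) = \trace((PQ^2P)^{1/2})$ as the nuclear (trace) norm $\norm{QP}[*]$ of $QP$, since the singular values of $QP$ are the square roots of the eigenvalues of $PQ^2P$. Hence
\[
W^2 - \eucnorm{\vect \mu_1 - \vect \mu_2}^2 = \eucnorm{P}[F]^2 + \eucnorm{Q}[F]^2 - 2\norm{QP}[*].
\]
Combining this with $\eucnorm{P-Q}[F]^2 = \eucnorm{P}[F]^2 + \eucnorm{Q}[F]^2 - 2\trace(PQ)$ and the parallelogram identity $\eucnorm{P-Q}[F]^2 + \eucnorm{P+Q}[F]^2 = 2(\eucnorm{P}[F]^2 + \eucnorm{Q}[F]^2)$, the two inequalities in \eqref{eq:bound_distance_gaussians} become equivalent respectively to the sharp trace inequalities $\trace(PQ) \leq \norm{QP}[*]$ (for the upper bound) and $4\norm{QP}[*] \leq \eucnorm{P+Q}[F]^2$ (for the lower bound).

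The first of these is immediate. Since $P, Q \succeq 0$, the product $QP$ is similar to the positive semi-definite matrix $Q^{1/2}PQ^{1/2}$ and therefore has non-negative eigenvalues, so that Weyl's majorization of eigenvalues by singular values yields $\trace(QP) = \sum_i \lambda_i(QP) \leq \sum_i \sigma_i(QP) = \norm{QP}[*]$, establishing the upper bound of \eqref{eq:bound_distance_gaussians}.

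The lower bound hinges on the sharp matrix trace inequality $4\norm{QP}[*] \leq \eucnorm{P+Q}[F]^2$ for $P, Q \succeq 0$, and this will be the main obstacle of the proof: the naive variational bound $2\norm{QP}[*] \leq \eucnorm{P}[F]^2 + \eucnorm{Q}[F]^2$, obtained from the factorization $QP = Q \cdot P$ in the definition $2\norm{M}[*] = \inf\{\eucnorm{X}[F]^2 + \eucnorm{Y}[F]^2 : XY^T = M\}$ of the nuclear norm, is a factor of two too weak. My plan is to exploit instead the variational characterization $\eucnorm{P}[F]^2 + \eucnorm{Q}[F]^2 - 2\norm{QP}[*] = \min_{U} \eucnorm{P - QU}[F]^2$ of the Bures--Wasserstein component, with minimum taken over orthogonal $U$ and attained at the polar factor $U_\ast$ of $QP$. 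Writing $P - Q = (P - Q U_\ast) + (Q U_\ast - Q)$ and expanding $\eucnorm{P - Q}[F]^2$ using $\trace(QPU_\ast^T) = \norm{QP}[*]$ together with $\eucnorm{Q U_\ast}[F] = \eucnorm{Q}[F]$, the required bound reduces after cancellation to the classical sharp matrix inequality on the Bures--Wasserstein distance (due to Olkin--Pukelsheim and Gelbrich), which can be proved either by invoking the operator concavity of the matrix square root or directly via a careful choice of factorization in the variational form of $\norm{\cdot}[*]$.
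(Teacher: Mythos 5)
Your reduction is clean and correct: setting $P=\Sigma_1^{1/2}$, $Q=\Sigma_2^{1/2}$ and recognizing the Bures term as the nuclear norm $\|QP\|_{s_1}$ turns the two sides of \eqref{eq:bound_distance_gaussians} into the matrix inequalities $\trace(PQ)\le\|QP\|_{s_1}$ (upper bound) and $4\|QP\|_{s_1}\le\|P+Q\|_F^2$ (lower bound), and the latter is exactly the form $\|PQ\|_{s_1}\le\tfrac14\|(P+Q)^2\|_{s_1}$ that appears in the paper. For the upper bound your argument is complete and valid, and it is a genuinely different --- and somewhat lighter --- route than the paper's: you observe that $QP$ is similar to $Q^{1/2}PQ^{1/2}\succeq 0$ and apply Weyl majorization (or, even more simply, $\trace(PQ)\ge 0$ together with $|\trace M|\le\|M\|_{s_1}$), whereas the paper invokes the Araki--Lieb--Thirring inequality.

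The lower bound, however, contains a real gap. You correctly isolate the crux $4\|QP\|_{s_1}\le\|P+Q\|_F^2$, but you never establish it. The claim that "the required bound reduces after cancellation to the classical sharp matrix inequality on the Bures--Wasserstein distance (due to Olkin--Pukelsheim and Gelbrich)" is circular: the Olkin--Pukelsheim/Gelbrich result is the closed-form formula of \cref{lemma:wasserstein_distance_between_gaussians}, not the inequality you need. The polar-decomposition expansion of $P-Q=(P-QU_*)+Q(U_*-I)$ does not help either: if you carry out the cancellation you advertise, using $\trace(Q^2U_*)=\trace(Q^2U_*^T)$, you simply recover the identity $\|P-Q\|_F^2-\|P-QU_*\|_F^2=2\bigl(\|QP\|_{s_1}-\trace(PQ)\bigr)$, which is what you already knew. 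Neither "operator concavity of the matrix square root" nor a "careful choice of factorization in the variational form" is actually carried out, and the naive factorization is, as you note yourself, a factor of two short. The missing ingredient is precisely the Bhatia--Kittaneh arithmetic--geometric mean inequality for unitarily invariant norms, $s_j(PQ)\le\tfrac14 s_j\bigl((P+Q)^2\bigr)$ for $P,Q\succeq 0$, which the paper cites at this point; with that reference your proof would be complete.
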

\begin{proof}
The first inequality in \cref{eq:bound_distance_gaussians} can be rewritten as
\[
     \trace (\mat \Sigma_1^{1/2} \mat \Sigma_2 \mat \Sigma_1^{1/2})^{1/2}
     \leq \frac{1}{4} \trace (\mat \Sigma_1 + \mat \Sigma_2 + 2 \mat \Sigma_1^{1/2} \mat \Sigma_2^{1/2})
    = \frac{1}{4} \eucnorm{\mat \Sigma_1^{1/2} + \mat \Sigma_2^{1/2}}[F]^2
\]
or, equivalently,
\[
    \eucnorm{\mat \Sigma_1^{1/2} \mat \Sigma_2^{1/2}}[s_1]
    = \sum_{j} s_j(\mat \Sigma_1^{1/2} \mat \Sigma_2^{1/2})
    \leq \frac{1}{4} s_j \left((\mat \Sigma_1^{1/2} + \mat \Sigma_2^{1/2})^2 \right)
    = \frac{1}{4} \eucnorm{(\mat \Sigma_1^{1/2} + \mat \Sigma_2^{1/2})^2}[s_1],
\]
where $s_j(\dummy)$ is the $j$-th singular value and
$\eucnorm{\dummy}[s_1]$ denotes the Schatten matrix norm with $p = 1$,
defined as the sum of the singular values of its argument.
This inequality follows from the general arithmetic mean/geometric mean inequality,
valid for any unitarily invariant matrix norm and any positive matrices in place of $\mat \Sigma_1^{1/2}$ and $\mat \Sigma_2^{1/2}$,
that is the subject of~\cite{MR1751140}.
To obtain the second inequality in \cref{eq:bound_distance_gaussians},
we employ the standard Araki--Lieb--Thirring inequality with $r = 1/2$ and $q = 1$,
\[
    \trace \left((\mat \Sigma_1^{1/2} \mat \Sigma_2 \mat \Sigma_1^{1/2})^{1/2}\right) \geq \trace \left(\mat \Sigma_1^{1/4} \mat \Sigma_2^{1/2} \mat \Sigma_1^{1/4}\right) = \trace \left(\mat \Sigma_1^{1/2} \mat \Sigma_2^{1/2}\right),
\]
which concludes the proof.
\end{proof}
\begin{remark}
It is in fact possible to recover the second inequality in the bound~\eqref{eq:bound_distance_gaussians} without having recourse to the Araki--Lieb--Thirring inequality,
by simply using the (nonsymmetric) transportation map $x \mapsto \vect \mu_2 + \mat \Sigma_2^{1/2} \, \mat \Sigma_1^{-1/2} (x - \vect \mu_1)$
to obtain an upper-bound for the Wasserstein distance.
\end{remark}

\begin{proposition}
    \label{proposition:stability_wasserstein_gaussian}
    Let $f^1$ and $f^2$ be two Gaussian solutions of \cref{eq:mean-field_equation},
    associated with (Gaussian) initial conditions $f^1_0$ and $f^2_0$, respectively.
    Under the assumptions of \cref{lemma:moments_convergence_noise}, it holds that
    \begin{equation}
        \label{eq:stability_wasserstein_gaussian}%
        W_2(f^1_t, f^2_t) \leq C(1 + mM + m^4 M^{7/2} R) \, \frac{\e^{- \sigma t}}{\sqrt{\alpha(t)}^{1+ \lfloor1\wedge\sigma\rfloor}} \, W_2(f^1_0, f^2_0),
    \end{equation}
    where $C$ is a constant that depends only on the dimension $d$
    and $\alpha(t)$ is given by \cref{eq:definition_alpha}.
\end{proposition}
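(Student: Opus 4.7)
By the remark immediately preceding this proposition, the nonlocal equation \eqref{eq:mean-field_equation} propagates Gaussians when $G$ is linear, so each solution has the form $f^i_t = g(\dummy\,;\,\vect u_0 + \vect\delta_i(t),\,\mat C_i(t))$, where $(\vect\delta_i,\mat C_i)$ solves the closed moment system from \cref{proposition:closed_system_moments} with initial data determined by $f^i_0$. My plan is to bound $W_2(f^1_t,f^2_t)$ by comparing only these Gaussian parameters, using \cref{lemma:wasserstein_distance_between_gaussians,lemaux} as the bridge between the Wasserstein distance and the moment bounds of \cref{lemma:moments_convergence_noise,lemma:moments_square_root}.

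The first step is to apply the upper bound in \cref{eq:bound_distance_gaussians} to obtain
\[
W_2(f^1_t,f^2_t)^2 \leq \eucnorm{\vect\delta_1(t)-\vect\delta_2(t)}^2 + \eucnorm{\mat C_1(t)^{1/2}-\mat C_2(t)^{1/2}}[F]^2,
\]
and then substitute the decay estimates \eqref{eq:decrease_first_moment} and \eqref{eq:decrease_covariance_square_root}. This reduces everything to estimating the three initial quantities $\eucnorm{\vect\delta_1(0)-\vect\delta_2(0)}$, $\eucnorm{\mat C_1(0)^{1/2}-\mat C_2(0)^{1/2}}[F]$ and $\eucnorm{\mat C_1(0)-\mat C_2(0)}[F]$ in terms of $W_2(f^1_0,f^2_0)$.

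The second step uses the lower bound in \cref{eq:bound_distance_gaussians} on the initial data: it yields immediately $\eucnorm{\vect\delta_1(0)-\vect\delta_2(0)} \leq W_2(f^1_0,f^2_0)$ and $\eucnorm{\mat C_1(0)^{1/2}-\mat C_2(0)^{1/2}}[F] \leq \sqrt{2}\,W_2(f^1_0,f^2_0)$. For the Frobenius norm of $\mat C_1(0)-\mat C_2(0)$ (which is what appears in Lemmas \ref{lemma:moments_convergence_noise} and \ref{lemma:moments_square_root}), I would use the factorisation
\[
\mat C_1(0) - \mat C_2(0) = \mat C_1(0)^{1/2}\bigl(\mat C_1(0)^{1/2}-\mat C_2(0)^{1/2}\bigr) + \bigl(\mat C_1(0)^{1/2}-\mat C_2(0)^{1/2}\bigr)\mat C_2(0)^{1/2},
\]
together with $\eucnorm{\mat C_i(0)^{1/2}}[2] \leq \sqrt{M}$ and the sub-multiplicativity $\eucnorm{AB}[F]\leq \eucnorm{A}[2]\eucnorm{B}[F]$, to bound this quantity by $2\sqrt{2M}\,W_2(f^1_0,f^2_0)$. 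Collecting constants produces the prefactor $C(1 + mM + m^4M^{7/2}R)$ advertised in the statement: the $mM$ comes from the square-root covariance decay, the $\sqrt{mM}$ from the direct mean term, and the $m^4 M^{7/2} R$ from the coupling term in \eqref{eq:decrease_first_moment} combined with the $\sqrt{M}$ factor from the factorisation above.

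The main obstacle is reconciling the two different decay rates: the mean part decays only as $\e^{-\sigma t}/\sqrt{\alpha(t)}$ while the square-root covariance part decays as $\e^{-\sigma t}/\alpha(t)$. When $\sigma<1$, $\alpha(t)$ can grow (algebraically if $\sigma=0$), so the slower rate $\e^{-\sigma t}/\sqrt{\alpha(t)}$ dominates and matches the stated exponent $1+\lfloor 1\wedge\sigma\rfloor = 1$. When $\sigma \geq 1$, however, $\alpha(t)$ is bounded above (by $1/\sigma + 1 \leq 2$), so one can crudely estimate $1/\sqrt{\alpha(t)} \leq \sqrt{2}/\alpha(t)$ and upgrade the mean-term decay to $\e^{-\sigma t}/\alpha(t)$, matching the exponent $1+\lfloor 1\wedge\sigma\rfloor = 2$. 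A case split on $\sigma<1$ versus $\sigma\geq 1$, followed by use of $\sqrt{a^2+b^2}\leq a+b$, then delivers the claimed unified bound \eqref{eq:stability_wasserstein_gaussian}.
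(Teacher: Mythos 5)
Your proposal is correct and follows the same overall strategy as the paper: apply the upper bound in \cref{eq:bound_distance_gaussians} at time $t$, substitute the decay estimates \eqref{eq:decrease_first_moment} and \eqref{eq:decrease_covariance_square_root}, and then use the lower bound in \cref{eq:bound_distance_gaussians} at time $0$ to control the initial moment differences by $W_2(f^1_0,f^2_0)$. The one place you diverge is the step bounding $\eucnorm{\mat C_1(0)-\mat C_2(0)}[F]$ in terms of $\eucnorm{\mat C_1(0)^{1/2}-\mat C_2(0)^{1/2}}[F]$: you use the explicit identity $\mat C_1-\mat C_2=\mat C_1^{1/2}(\mat C_1^{1/2}-\mat C_2^{1/2})+(\mat C_1^{1/2}-\mat C_2^{1/2})\mat C_2^{1/2}$ together with $\eucnorm{AB}[F]\leq\eucnorm{A}[2]\eucnorm{B}[F]$, whereas the paper invokes its auxiliary \cref{lemma:second_technical_result}, which passes through the equivalent metric $d(\dummy,\dummy)$ and picks up a dimension-dependent constant. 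Your direct factorization is a little cleaner (it yields the dimension-free constant $2$), although the final bound is dimension-dependent anyway through $C_R$ in \eqref{eq:decrease_covariance_square_root}. Your explicit $\sigma<1$ versus $\sigma\geq1$ case split, using the fact that $\alpha(t)\geq 1$ in the first case and $\alpha(t)\leq 1$ in the second (your bound $\alpha(t)\leq 1/\sigma+1\leq 2$ is correct but not tight; one actually has $\alpha(t)\leq 1$ when $\sigma\geq 1$), correctly reproduces what the paper abbreviates as the $\alpha(t)\wedge\alpha(t)^2$ rate, and the resulting exponent matches $1+\lfloor 1\wedge\sigma\rfloor$.
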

\begin{proof}
Combining the moment bounds~\eqref{eq:decrease_covariance_square_root} and~\eqref{eq:decrease_first_moment} with \cref{eq:bound_distance_gaussians},
and denoting the Wasserstein distance on the left-hand side of \cref{eq:stability_wasserstein_gaussian} by $W$ for short,
we obtain
\begin{align*}
    W^2
    & \leq \eucnorm*{\mat C_1(t)^{1/2} - \mat C_2(t)^{1/2}}[F]^2 + \eucnorm*{\vect \delta_2(t) - \vect \delta_1(t) }^2 \\
    & \leq C_R^2 M^2 \, m^2 \, \eucnorm*{\mat C_1(0)^{1/2} - \mat C_2(0)^{1/2}}[F]^2 \, \left(\frac{\e^{-2 \sigma t}}{\alpha(t)^2} \right) \\
    & \qquad + \left( 2 \, m M \, \eucnorm*{\vect \delta_1(0) - \vect \delta_2(0)}[2]^2  + m^8 M^6 R^2 \, \eucnorm{\mat C_2(0) - \mat C_1(0)}[F]^2 \right) \, \left(\frac{\e^{-2 \sigma t}}{\alpha(t)} \right) \\
    & \leq \big( 2(C_R m^2 M^2 + m M) \, W_2(f^1_0, f^2_0)^2 + m^8 M^6 R^2 \, \eucnorm{\mat C_2(0) - \mat C_1(0)}[F]^2\big) \, \frac{\e^{- 2 \sigma t}}{\alpha(t) \wedge \alpha(t)^2}.
\end{align*}
Employing \cref{lemma:second_technical_result},
which generalizes the inequality
\[
    \forall a, b \geq 0: \qquad |a - b| = |\sqrt{a} + \sqrt{b}| |\sqrt{a} - \sqrt{b}| \leq 2 \max(\sqrt{a}, \sqrt{b}) |\sqrt{a} - \sqrt{b}|
\]
to symmetric positive semi-definite matrices,
and using \cref{eq:bound_distance_gaussians} again,
we finally obtain
\[
    W_2(f^1_t, f^2_t)^2 \leq  (2 C_R m^2 M^2 + 2 m M + C m^8 M^7 R^2) \, W_2(f^1_0, f^2_0)^2  \, \frac{\e^{- 2 \sigma t}}{\alpha(t) \wedge \alpha(t)^2},
\]
which leads to our claim.
\end{proof}

To prove a more general stability result,
we will combine the ideas of \cref{proposition:convergence_over_manifold_same_covariance} and \cref{proposition:stability_wasserstein_gaussian}.
Additionally, we will need the following lemma.
\begin{lemma}
    [Wasserstein distance between linearly transformed densities]
    \label{lemma:wasserstein_dilated_rotated}%
    Let $\mat A, \mat B \in \real^{d \times d}$ be nonsingular, possibly nonsymmetric matrices,
    and let $f$ be a probability measure with finite second moment, $f \in \mathcal P_2(\real^d)$.
    Then it holds that
    \begin{equation}
        \label{eq:wasserstein_linear_map}%
        W_2(\mat A_\sharp f, \mat B_\sharp f) \leq \eucnorm{\mat A - \mat B}[2]  \, \sqrt{\trace (\mathcal C(f)) + \eucnorm{\mathcal M(f)}^2} ,
    \end{equation}
    where $\mathcal M(f)$ and $\mathcal C(f)$ are the first and second moments of $f$:
    \[
        \mathcal M(f) = \int x \, f(\d x), \qquad
        \mathcal C(f) = \int \left(x - \mathcal M(f) \right) \otimes \left(x - \mathcal M(f) \right) \, f(\d x).
    \]
\end{lemma}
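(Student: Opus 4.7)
The plan is to exhibit an explicit suboptimal coupling of $\mat A_\sharp f$ and $\mat B_\sharp f$, built from $f$ via the obvious synchronous map, and bound its cost by the operator norm of $\mat A - \mat B$. Concretely, I would consider the map $r : \real^d \to \real^d \times \real^d$, $r(\vect x) = (\mat A \vect x, \mat B \vect x)$, and define the transport plan $\gamma := r_\sharp f$ on $\real^d \times \real^d$. The first step is to verify that $\gamma$ has the correct marginals: by the same change-of-variables computation that appears in the proof of \cref{lemma:wasserstein_linear}, testing against $\varphi \in C_b(\real^d)$ gives $\int \varphi(\vect y) \, \gamma(\d \vect y, \d \vect z) = \int \varphi(\mat A \vect x) \, f(\d \vect x) = \int \varphi \, \d (\mat A_\sharp f)$, and analogously for the second marginal.

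Next I would bound the transport cost of $\gamma$. By definition of the pushforward and of the operator norm,
\begin{align*}
    \int \!\!\! \int_{\real^d \times \real^d} \eucnorm{\vect y - \vect z}^2 \, \gamma(\d \vect y, \d \vect z)
    = \int_{\real^d} \eucnorm{(\mat A - \mat B) \vect x}^2 \, f(\d \vect x)
    \leq \eucnorm{\mat A - \mat B}[2]^2 \int_{\real^d} \eucnorm{\vect x}^2 \, f(\d \vect x).
\end{align*}
The remaining moment integral is then decomposed using $\vect x = (\vect x - \mathcal M(f)) + \mathcal M(f)$: expanding the square and observing that the cross term vanishes by the definition of $\mathcal M(f)$, we obtain $\int \eucnorm{\vect x}^2 \, f(\d \vect x) = \trace(\mathcal C(f)) + \eucnorm{\mathcal M(f)}^2$.

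Finally, since the squared Wasserstein distance is the infimum of the transport cost over all admissible couplings, the bound for the particular coupling $\gamma$ gives $W_2(\mat A_\sharp f, \mat B_\sharp f)^2 \leq \eucnorm{\mat A - \mat B}[2]^2 \, (\trace(\mathcal C(f)) + \eucnorm{\mathcal M(f)}^2)$, and taking square roots yields \cref{eq:wasserstein_linear_map}. There is no genuine obstacle here: the whole argument amounts to checking one explicit synchronous coupling, and the nontrivial content is just the operator-norm bound together with the moment identity. The one point to be mildly careful about is that neither $\mat A$ nor $\mat B$ is assumed symmetric, but this plays no role in the argument above since we never diagonalize them; the nonsingularity assumption is only used implicitly to guarantee that $\mat A_\sharp f$ and $\mat B_\sharp f$ lie in $\mathcal P_2(\real^d)$.
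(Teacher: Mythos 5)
Your proof is correct and follows exactly the same strategy as the paper: both use the synchronous coupling $\gamma = (\mat A \times \mat B)_\sharp f$, bound its cost by $\eucnorm{\mat A - \mat B}[2]^2 \int \eucnorm{\vect x}^2 \, f(\d \vect x)$, and invoke the moment identity $\int \eucnorm{\vect x}^2 \, f(\d \vect x) = \trace(\mathcal C(f)) + \eucnorm{\mathcal M(f)}^2$. You simply spell out the marginal check and the moment decomposition in slightly more detail than the paper does.
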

\begin{proof}
    Let us consider the transference plan $\gamma = (\mat A \times \mat B)_\sharp f$,
    which clearly has the required marginals.
    (Here $A \times B$ is the operator $x \mapsto (Ax, Bx)$.)
    We calculate, by a change of variable,
    \begin{align*}
        \int \!\!\! \int_{\real^{d} \times \real^d} \eucnorm{x - y}^2 \, \gamma(\d x \, \d y) & = \int_{\real^{d}} \eucnorm{\mat A x - \mat B x}^2 \, f(\d x) \leq \int_{\real^{d}} \eucnorm{\mat A - \mat B}[2]^2 \, \eucnorm{x}^2 \, f(\d x),
    \end{align*}
    which directly leads to the conclusion.
\end{proof}

\begin{proposition}
    \label{proposition:general_contraction_property}%
    Let $f^1$ and $f^2$ be two solutions of the nonlinear nonlocal mean field equation~\eqref{eq:mean-field_equation} with linear forward model $G$.
    Under the assumptions of \cref{lemma:moments_convergence_noise}, it holds that
    \begin{align}
        \label{eq:general_contraction_property}%
        W_2( f^1_t, f^2_t ) \leq  C(1 + m^4 M^4 + m^4 M^{7/2} R) \, \frac{\e^{- \sigma t}}{\sqrt{\alpha(t)}^{1+ \lfloor1\wedge\sigma\rfloor}} \,  W_2( f^1_0, f^2_0),
    \end{align}
    where $\alpha(t)$ is given by \cref{eq:definition_alpha}.
\end{proposition}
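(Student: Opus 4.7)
The plan is to interpolate through an auxiliary solution of the \emph{linear} Fokker--Planck equation~\eqref{eq:mean-field_equation_linear}. Specifically, I would introduce $\tilde f^2$ as the solution of~\eqref{eq:mean-field_equation_linear} with prescribed covariance $\mat C(t) = \mat C_1(t)$ (the one associated with $f^1$) and initial condition $f^2_0$, and then use the triangle inequality
\[
    W_2(f^1_t, f^2_t) \leq W_2(f^1_t, \tilde f^2_t) + W_2(\tilde f^2_t, f^2_t).
\]
Because $f^1$ and $\tilde f^2$ both solve~\eqref{eq:mean-field_equation_linear} with the \emph{same} covariance $\mat C_1(t)$, \cref{proposition:convergence_over_manifold_same_covariance} combined with~\eqref{eq:bound_covariance_u} controls the first summand directly by $\sqrt{mM}\,(\e^{-\sigma t}/\sqrt{\alpha(t)})\,W_2(f^1_0, f^2_0)$. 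All the work then lies in estimating the second summand, where $\tilde f^2$ and $f^2$ share the initial datum but evolve with distinct prescribed covariances.

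For that second summand I would exploit the convolution representation~\eqref{eq:intermediate_expression_convolution}. With the shorthand $T^i(\vect v) := \vect u_0 + \mat U_i(0,t)(\vect v - \vect u_0)$ and $\mat \Sigma^i(t) := (1-\e^{-2\sigma t})\mat C_i(t)$ (the latter from~\eqref{eq:expression_sigma}), one obtains
\[
    \tilde f^2_t = (T^1_\sharp f^2_0) \star g(\dummy;\vect 0, \mat \Sigma^1(t)), \qquad f^2_t = (T^2_\sharp f^2_0) \star g(\dummy;\vect 0, \mat \Sigma^2(t)).
\]
Convexity of $W_2$ under convolution together with its translation invariance then gives
\[
    W_2(\tilde f^2_t, f^2_t) \leq W_2\bigl(T^1_\sharp f^2_0, T^2_\sharp f^2_0\bigr) + W_2\bigl(g(\dummy;\vect 0, \mat \Sigma^1(t)), g(\dummy;\vect 0, \mat \Sigma^2(t))\bigr).
\]
I would bound the pushforward summand via \cref{lemma:wasserstein_dilated_rotated}, producing the factor $\eucnorm{\mat U_1(0,t) - \mat U_2(0,t)}[2]\sqrt{\trace \mathcal C(f^2_0) + |\mathcal M(f^2_0) - \vect u_0|^2}$; the operator norm is then estimated by~\eqref{eq:contraction_U} (with $s=0$) in terms of $\eucnorm{\mat C_1(0) - \mat C_2(0)}[F]$ and $\e^{-\sigma t}/\sqrt{\alpha(t)}$, while the geometric factor is controlled by a polynomial in $M$ and $R$ under the hypotheses of \cref{lemma:moments_convergence_noise}. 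The Gaussian summand is handled through \cref{lemaux} followed by \cref{lemma:moments_square_root}, yielding a decay $\sqrt{1-\e^{-2\sigma t}}\,\e^{-\sigma t}/\alpha(t)$ in front of $\eucnorm{\mat C_1(0)^{1/2} - \mat C_2(0)^{1/2}}[F]$.

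To close the argument I need to control both $\eucnorm{\mat C_1(0)-\mat C_2(0)}[F]$ and $\eucnorm{\mat C_1(0)^{1/2} - \mat C_2(0)^{1/2}}[F]$ by $W_2(f^1_0, f^2_0)$. The general lower bound~\eqref{eq:general_lowerbound_wasserstein}, valid for arbitrary measures by \cref{remark:lower_bound_wasserstein_general}, combined with the first half of~\eqref{eq:bound_distance_gaussians}, already yields $\eucnorm{\mat C_1(0)^{1/2} - \mat C_2(0)^{1/2}}[F] \lesssim W_2(f^1_0, f^2_0)$; the raw-covariance difference is then recovered from its square root through the matrix-valued analogue of $|a-b|=(\sqrt a + \sqrt b)|\sqrt a - \sqrt b|$ already used in the proof of \cref{proposition:stability_wasserstein_gaussian}, at the cost of an additional $\sqrt{M}$ factor. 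Gathering all these estimates and invoking the elementary inequality $\sqrt{1-\e^{-2\sigma t}}/\alpha(t) \lesssim 1/\sqrt{\alpha(t)}^{\,1+\lfloor 1\wedge\sigma\rfloor}$, which distinguishes the regime $\sigma < 1$ (the first factor is merely bounded by a constant) from $\sigma \geq 1$ (it saves an additional $1/\sqrt{\alpha(t)}$), produces~\eqref{eq:general_contraction_property}. The main obstacle is not the decomposition itself, which is essentially dictated by \cref{proposition:convergence_over_manifold_same_covariance}, but the careful bookkeeping required to combine the various $m$, $M$, $R$ dependences into the announced prefactor $C(1 + m^4 M^4 + m^4 M^{7/2} R)$.
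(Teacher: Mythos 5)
Your proposal is correct and follows essentially the same strategy as the paper: interpolating through an intermediate measure via the triangle inequality, then using the convolution representation and convexity of $W_2$ to split into a pushforward-difference term (\cref{lemma:wasserstein_dilated_rotated} plus \cref{eq:contraction_U}), a same-covariance linear term (which in the paper appears as the third summand controlled by \cref{lemma:wasserstein_linear}), and a Gaussian term (controlled by \cref{lemaux} and \cref{lemma:moments_square_root}). The only cosmetic difference is the choice of pivot: you take $\tilde f^2$ to be the bona fide solution of the linear equation with covariance $\mat C_1$ and datum $f^2_0$, whereas the paper's $f^{1,2}$ is a formal cross mixing the pushforward from $f^1_0$ via $\mat U_1$ with the Gaussian of covariance $\mat \Sigma_2(t)$; after the first application of convexity these two choices produce the same three elementary terms, merely permuting the roles of the indices $1$ and $2$.
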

\begin{proof}
    Let us denote the fundamental matrices associated with the two solutions by $\mat U_i(s, t)$, $i = 1, 2$.
    Our starting point will be \cref{eq:intermediate_expression_convolution},
    rewritten in a such a way that the Gaussian densities are centered at zero:
    \begin{align}
        \label{eq:starting_point_general_proof}%
        f^i(\vect u, t) =  \int_{\real^{d}} \frac{f_0^i\left(\mat U_i(0, t)^{-1} (\vect w + \vect u - \vect u_0) + \vect u_0\right)}{\det (\mat U_i(0, t))} \, g(\vect w; 0, \mat \Sigma_i(t)) \, \d \vect w, \qquad i = 1, 2.
    \end{align}
    Introducing new functions $\hat f^i(\vect u, t) := f^i(\vect u + \vect u_0, t)$ and $\hat f^i_0(\vect u) = f^i_0(\vect u + \vect u_0)$ for convenience,
    we obtain the simpler expression
    \begin{align*}
        \hat f^i(\vect u, t) =  \int_{\real^{d}} \frac{\hat f_0^i\left(\mat U_i(0, t)^{-1} (\vect w + \vect u)\right)}{\det (\mat U_i(0, t))} \, g(\vect w; 0, \mat \Sigma_i(t)) \, \d \vect w, \qquad i = 1, 2.
    \end{align*}
    Since the Wasserstein distance is invariant under translation of its arguments,
    it holds that
    \[
        W_2( f^1_t, f^2_t ) = W_2( \hat f^1_t, \hat f^2_t ), \qquad W_2( f^1_0, f^2_0 ) = W_2( \hat f^1_0, \hat f^2_0 ).
    \]
    In other words, we can assume without loss of generality that  $\vect u_0 = 0$.
    From here on, we will drop the hats in $\hat f^i$ and $\hat f^i_0$ for notational convenience.
    Let us now introduce
    \begin{align*}
        f^{1,2}(\vect u, t) =  \int_{\real^{d}} \frac{f_0^1\left(\mat U_1(0, t)^{-1} (\vect w + \vect u) \right)}{\det (\mat U_1(0, t))} \, g(\vect w; 0, \mat \Sigma_2(t)) \, \d \vect w.
    \end{align*}
    Then, using the triangle inequality, we have
    \begin{align}
        \notag%
        W_2(f^1_t, f^2_t)
        & \leq W_2 \left( f^1_t, f^{1,2}_t \right) + W_2 \left( f^{1,2}_t, f^{2}_t \right).
    \end{align}
    Both terms can be simplified by using the convexity property of the Wasserstein metric,
    leading to the inequality
    \begin{align}
        \notag%
        W_2(f^1_t, f^2_t)
        &\leq W_2 \left( g(\dummy; 0, \mat \Sigma_1(t)), g(\dummy; 0, \mat \Sigma_2(t)) \right) \\
        \label{eq:wasserstein_bound_split_in_two}%
        &\quad + W_2 \left( \frac{f_0^1\left(\mat U_1(0, t)^{-1} \dummy \right)}{\det (\mat U_1(0, t))}, \frac{f_0^2\left(\mat U_2(0, t)^{-1} \dummy \right)}{\det (\mat U_2(0, t))} \right).
    \end{align}
    Using \cref{eq:expression_sigma} and employing the triangle inequality again for the second term,
    we obtain
    \begin{align*}
        & W_2(f^1_t, f^2_t)
        \leq \, (1- \e^{-2 \sigma t}) \, W_2 \left( g(\dummy; 0, \mat C_1(t)), g(\dummy; 0,  \mat C_2(t)) \right) \\
        & \quad + W_2 \left( \frac{f^1_0\left(\mat U_1(0, t)^{-1} \dummy \right)}{\det (\mat U_1(0, t))}, \frac{f^1_0\left(\mat U_2(0, t)^{-1} \dummy \right)}{\det (\mat U_2(0, t))} \right)
        + W_2 \left( \frac{f^1_0\left(\mat U_2(0, t)^{-1} \dummy \right)}{\det (\mat U_2(0, t))}, \frac{f^2_0\left(\mat U_2(0, t)^{-1} \dummy \right)}{\det (\mat U_2(0, t))} \right).
    \end{align*}
    Employing \cref{eq:bound_distance_gaussians} for the first term,
    \cref{lemma:wasserstein_dilated_rotated} for the second,
    and \cref{lemma:wasserstein_linear} for the third,
    we deduce
    \begin{align*}
        W_2(f^1_t, f^2_t)
        \leq & \, (1- \e^{-2 \sigma t}) \, \eucnorm*{\mat C_1(t)^{1/2} - \mat C_2(t)^{1/2}}[F] \\
             & + \eucnorm*{\mat U_1(0, t) - \mat U_2(0, t)}[2] \, \sqrt{\trace{\mat C_1(0)} + \eucnorm{\vect \delta_1(0)}^2}
             + \eucnorm{\mat U_2(t) \mat U_2(t)^T}[2] \, W_2 (f^1_0, f^2_0).
    \end{align*}
    Employing \cref{eq:decrease_covariance_square_root} for the first term,
    \cref{eq:contraction_U} and \cref{eq:technical_result_1d} for the second,
    \cref{eq:bound_covariance_u} for the third,
    and \cref{remark:lower_bound_wasserstein_general} to bound $\eucnorm{\mat C_1(t)^{1/2} - \mat C_2(t)^{1/2}}[F]$ from above by the Wasserstein distance,
    we finally obtain
    \begin{align*}
        W_2(f^1_t, f^2_t)
        \leq & \, C (m M + m^4 M^4 + m^4 M^{7/2} R + \sqrt{m M}) \, \frac{\e^{-\sigma t}}{\sqrt{\alpha(t)} \wedge \alpha(t)} \, W_2 (f^1_0, f^2_0),
    \end{align*}
    which concludes the proof.
\end{proof}
We note that, strictly speaking,
\cref{proposition:general_contraction_property} is not a generalization of \cref{proposition:stability_wasserstein_gaussian}
because the constant on the right-hand side of \cref{eq:general_contraction_property} contains the term $m^4 M^4$,
which was not present in \cref{eq:stability_wasserstein_gaussian}.
\begin{remark}
In the case $\sigma = 0$,
assuming without loss of generality that $\vect u_0 = 0$,
we have the following simpler expression instead of \cref{eq:starting_point_general_proof}:
\begin{align*}
f^i(\vect u, t) =  \frac{f_0^i\left(\mat U_i(0, t)^{-1} (\vect u)\right)}{\det (\mat U_i(0, t))}, \qquad i = 1, 2,
\end{align*}
so we directly obtain \cref{eq:wasserstein_bound_split_in_two} without the first term on the right-hand side.
\end{remark}
\begin{remark}
\Cref{proposition:general_contraction_property} can be proved with a probabilistic approach too,
although with slightly different constants on the right-hand side.
Since the proof is very similar in spirit to the one given above,
we will not present it here.
\end{remark}

\begin{remark}
Notice that, in contrast to~\cite[Proposition 2]{2019arXiv190308866G},
the rate of decay shown in \cref{proposition:general_contraction_property} does not depend on $\mat B$,
i.e., on the Hessian of $\Phi_R$. More importantly, the rate of decay for $\sigma>0$ is sharp.
In order to check this,
note first that the mean $\vect\delta (t)$ decays as $\e^{-\sigma t}$ because~\eqref{eq:equation_convenient_u} implies
$$
\eucnorm{\vect\delta (t)}_{\mat C(t)} = \e^{-\sigma t} \eucnorm{\vect\delta (0)}_{\mat C(0)} \qquad \mbox{and thus,} \qquad \eucnorm{\vect\delta (t)} \geq \frac{1}{mM}\frac{\e^{-\sigma t}}{\alpha(t)} \eucnorm{\vect\delta (0)}.
$$
On the other hand,
since the first inequality in \eqref{eq:bound_distance_gaussians} in \cref{lemaux} holds for general probability measures,
then
$$
W_2( f_t, f_\infty ) \geq \eucnorm{\vect\delta (t)}
$$
for any solution $f_t$ of \eqref{eq:mean-field_equation}, with $f_\infty$ being the Gaussian equilibrium.
\end{remark}

\paragraph{Acknowledgments}%
The authors are grateful to Giuseppe Visconti, Grigorios A. Pavliotis, Andrei Velicu, Franca Hoffmann and Andrew Stuart for useful suggestions.
JAC and UV were partially supported by the EPSRC grant number  EP/P031587/1.

\appendix
\section{Auxiliary technical results}%
\label{sec:auxiliary_technical_results}

\begin{lemma}
    [A concavity inequality]
    \label{lemma:technical_result_concavity_matrix_square_root}%
    Let $\mat M_1$, $\mat M_2$ and $\mat M$ be symmetric, positive-semidefinite matrices in $\real^{d\times d}$.
    Then it holds that
    \begin{equation}
        \label{eq:technical_result_concavity_matrix_square_root}%
        \eucnorm{(\mat M + \mat M_1)^{1/2} - (\mat M + \mat M_2)^{1/2}}[F] \leq C_R(d) \eucnorm{\mat M_1^{1/2} - \mat M_2^{1/2}}[F],
    \end{equation}
    for a constant $C_R$ that depends only on $d$.
\end{lemma}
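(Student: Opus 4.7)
The plan is to lift the scalar identity
\[
|\sqrt{m+m_1}-\sqrt{m+m_2}| = \frac{\sqrt{m_1}+\sqrt{m_2}}{\sqrt{m+m_1}+\sqrt{m+m_2}}\,|\sqrt{m_1}-\sqrt{m_2}| \leq |\sqrt{m_1}-\sqrt{m_2}|
\]
to the matrix setting by means of the Sylvester integral representation of the matrix square root, accepting a dimension-dependent constant from non-commutativity. First, by the perturbation $\mat M \to \mat M+\epsilon \mat I$, $\mat M_i \to \mat M_i+\epsilon \mat I$ and letting $\epsilon \to 0$ at the end, I would reduce to the case where $\mat M, \mat M_1, \mat M_2 \succ 0$. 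Setting $\mat A_i := (\mat M+\mat M_i)^{1/2}$, the matrix $\mat W := \mat A_1-\mat A_2$ then solves the Sylvester equation $\mat A_1\mat W+\mat W\mat A_2 = \mat M_1-\mat M_2$, whose unique solution admits the integral representation
\[
\mat W = \int_0^\infty e^{-s\mat A_1}(\mat M_1-\mat M_2)\,e^{-s\mat A_2}\,\mathrm{d}s.
\]

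The crucial second step is to use the matrix analogue of $m_1-m_2 = (\sqrt{m_1}+\sqrt{m_2})(\sqrt{m_1}-\sqrt{m_2})$, namely
\[
\mat M_1-\mat M_2 = \mat M_1^{1/2}\mat X + \mat X\mat M_2^{1/2}, \qquad \mat X := \mat M_1^{1/2}-\mat M_2^{1/2}.
\]
This factorisation positions each $\mat M_i^{1/2}$ directly adjacent to $e^{-s\mat A_i}$, which is exactly where the operator-monotonicity consequence $\mat M_i \preceq \mat A_i^2$ (coming from $\mat A_i^2 = \mat M+\mat M_i \succeq \mat M_i$) can be brought to bear. Conjugating this operator inequality by $e^{-s\mat A_i}$, which commutes with $\mat A_i$, and using the elementary spectral bound $\sup_{\mu\geq 0}\mu^2 e^{-2s\mu} = (es)^{-2}$, yields the key estimate
\[
\|\mat M_i^{1/2}\,e^{-s\mat A_i}\|_2 \leq \frac{1}{es}\,,
\]
uniformly in $\mat M$. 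Combined with $\|e^{-s\mat A_j}\|_2 \leq 1$ and the submultiplicative property of the Frobenius norm, each integrand is thus controlled by a constant multiple of $\|\mat X\|_F/(es)$.

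The main obstacle is that the resulting pointwise bound is logarithmically divergent both at $s=0$ and at $s=\infty$. Both divergences are however spurious: near $s=0$ the sharper estimate $\|\mat M_i^{1/2}e^{-s\mat A_i}\|_2 \leq \|\mat M_i^{1/2}\|_2$ holds, and near $s=\infty$ the integrand actually vanishes identically on the common null space of $\mat A_1$ and $\mat A_2$—where $e^{-s\mat A_i}$ does not decay—because on the intersection of the null spaces of $\mat M$, $\mat M_1$ and $\mat M_2$ both $\mat X$ and $\mat M_i^{1/2}$ restrict to zero. Reconciling these two regimes into a single bound proportional to $\|\mat X\|_F$ is what I expect to require most care; the dimension-dependent factor $C_R(d)$ would enter through the finite-dimensional equivalence $\|\cdot\|_F \leq \sqrt{d}\,\|\cdot\|_2$ that allows one to bridge the two ranges of $s$ without retaining dependence on the norms of $\mat M_i^{1/2}$ or on the smallest positive eigenvalues of $\mat A_i$.
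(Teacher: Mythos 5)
Your route via the Sylvester integral representation is genuinely different from the paper's, and the preparatory steps are sound: the reduction to $\mat M,\mat M_1,\mat M_2\succ 0$, the identity $\mat A_1\mat W+\mat W\mat A_2=\mat M_1-\mat M_2$ with $\mat W=\int_0^\infty \e^{-s\mat A_1}(\mat M_1-\mat M_2)\e^{-s\mat A_2}\,\d s$, the factorisation $\mat M_1-\mat M_2=\mat M_1^{1/2}\mat X+\mat X\mat M_2^{1/2}$, and the spectral bound $\eucnorm*{\mat M_i^{1/2}\e^{-s\mat A_i}}[2]\leq \min\bigl(\eucnorm*{\mat M_i^{1/2}}[2],\,1/(\e s)\bigr)$ are all correct. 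The genuine gap is the $s\to\infty$ regime, and you have not closed it. Your fix for $s\to 0$ actually works on its own: $\int_0^{T}\min(\eucnorm*{\mat M_i^{1/2}}[2],1/(\e s))\,\d s\leq 1/\e$ with $T=1/(\e\eucnorm*{\mat M_i^{1/2}}[2])$, uniformly in $\mat M_i$. But for $s>T$ the only uniform pointwise bound coming from submultiplicativity is $\eucnorm*{\e^{-s\mat A_1}\mat M_1^{1/2}\mat X\,\e^{-s\mat A_2}}[F]\leq\eucnorm{\mat X}[F]/(\e s)$, and $\int_T^\infty s^{-1}\,\d s=\infty$. Your qualitative observation that the integrand vanishes on $\ker\mat A_1\otimes\ker\mat A_2$ does not repair this: away from that kernel, the decay rate of $\e^{-s\mat A_1}\otimes\e^{-s\mat A_2}$ is governed by the smallest \emph{positive} eigenvalue of $\mat A_1\oplus\mat A_2$, which has no uniform lower bound, so the observation does not upgrade $1/s$ to something integrable. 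To repair the argument you would have to abandon the submultiplicative bound and work per-entry: diagonalise $\mat A_k=\mat U_k\mat D_k\mat U_k^T$, set $\tilde{\mat Y}:=\mat U_1^T\mat M_1^{1/2}\mat X\mat U_2$, and use that $\abs*{\tilde Y_{ij}}\leq\eucnorm*{\mat M_1^{1/2}\vect u_i^{(1)}}\eucnorm*{\mat X\vect u_j^{(2)}}\leq\lambda_i\eucnorm*{\mat X\vect u_j^{(2)}}$ (since $\mat M_1\preceq\mat A_1^2$), whence
\begin{equation*}
\int_0^\infty\eucnorm*{\e^{-s\mat D_1}\tilde{\mat Y}\e^{-s\mat D_2}}[F]\,\d s\leq\sum_{i,j}\frac{\abs*{\tilde Y_{ij}}}{\lambda_i+\mu_j}\leq\sum_{i,j}\eucnorm*{\mat X\vect u_j^{(2)}}\leq d^{3/2}\eucnorm{\mat X}[F],
\end{equation*}
with the $0/0$ entries interpreted as $0$. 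That per-entry estimate is the missing idea; without it, the integral in your representation is not controlled.

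For context, the paper sidesteps the integral representation entirely. It proves the inequality first for the variational metric $d(\mat M_1,\mat M_2)=\sup_{\eucnorm{x}=1}\abs*{\eucnorm{\mat M_1 x}-\eucnorm{\mat M_2 x}}$, for which the claim reduces, pointwise in the unit vector $x$, to exactly the scalar inequality $\abs{\sqrt{m+m_1}-\sqrt{m+m_2}}\leq\abs{\sqrt{m_1}-\sqrt{m_2}}$ that you started from, applied with $m=x^T\mat M x$, $m_i=x^T\mat M_i x$; the dimension-dependent constant $C_R(d)$ then enters only through the equivalence of $d(\dummy,\dummy)$ with $\eucnorm{\dummy}[F]$ on the cone of symmetric positive-semidefinite matrices. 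This is considerably shorter than the Sylvester route, and notably avoids the need for a degenerate-limit argument ($\epsilon\to 0$) altogether.
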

\begin{proof}
    The statement is obvious in one dimension.
    For the general case,
    we start by showing the statement for the metric
    \begin{equation}
        \label{eq:technical_result_1d}
        d(\mat M_1, \mat M_2)
        = \sup_{\eucnorm{x} = 1} \abs{ \vphantom{\Big(}\! \eucnorm{\mat M_1 x} - \eucnorm{\mat M_2 x} }
        = \sup_{\eucnorm{x} = 1} \abs{ \sqrt{x^T \mat M_1^2 x} - \sqrt{x^T \mat M_2^2 x} },
    \end{equation}
    and then we show that this metric is equivalent to the that induced by the Frobenius norm (or any other matrix norm) on the space of symmetric positive-semidefinite matrices.
    To complete the first part, we expand \cref{eq:technical_result_1d} and use the one-dimensional version of this lemma:
    \begin{align*}
        d((\mat M + \mat M_1)^{1/2}, (\mat M + \mat M_2)^{1/2}) &= \sup_{ \eucnorm{x} = 1} \abs{ \sqrt{x^T (\mat M + \mat M_1) x} - \sqrt{x^T (\mat M + \mat M_2) x} } \\
                                                &= \sup_{ \eucnorm{x} = 1} \abs{ \sqrt{x^T\mat M x + x^T\mat M_1 x} - \sqrt{x^T \mat M x + x^T \mat M_2 x} } \\
                                                &\leq \sup_{ \eucnorm{x} = 1} \abs{\sqrt{x^T \mat M_1 x} - \sqrt{x^T \mat M_2 x}} = d(\mat M_1^{1/2}, \mat M_2^{1/2}).
    \end{align*}
    To complete the second part, we must show that there exist constants $C_1$ and $C_2$ such that
    \[
        \forall \mat M_1, \mat M_2 \succcurlyeq 0, \qquad  C_1 \eucnorm{\mat M_1 - \mat M_2}[F] \leq d(\mat M_1, \mat M_2) \leq C_2 \eucnorm{\mat M_1 - \mat M_2}[F].
    \]
    The first inequality is proved in~\cite[Lemma C.1]{abdulle2017spectral}.
    The second inequality follows after taking the supremum (over the sphere $\eucnorm{x} = 1$) in the following equation,
    where we employ the triangle inequality:
    \[
        \abs{\vphantom{\Big(} \! \eucnorm{\mat M_1 x} - \eucnorm{\mat M_2 x} } \leq \eucnorm{\mat M_1 x - \mat M_2 x} \leq \eucnorm{\mat M_1 - \mat M_2}[2] \eucnorm{x}.
    \]
    This completes the proof.
\end{proof}

Using the same trick, of passing to the equivalent distance $d(\dummy, \dummy)$,
we can show the following.
\begin{lemma}
    \label{lemma:second_technical_result}%
    Let $\mat M_1$, $\mat M_2$ be symmetric, positive-semidefinite matrices in $\real^{d\times d}$.
    It holds that
    \begin{equation}
        \label{eq:second_technical_result}%
        \eucnorm*{\mat M_1 - \mat M_2}[F] \leq C(d) \, \max(\eucnorm*{\mat M_1^{1/2}}[F], \eucnorm*{\mat M_2^{1/2}}[F]) \, \eucnorm*{\mat M_1^{1/2} - \mat M_2^{1/2}}[F],
    \end{equation}
    for a constant $C$ that depends only on $d$.
\end{lemma}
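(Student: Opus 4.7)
The plan is to mimic exactly the scalar identity $|a^2 - b^2| = (a+b)|a-b|$, but applied pointwise to the quadratic forms $x^T \mat M_i x = \|\mat M_i^{1/2} x\|^2$. Concretely, I would set $\mat A_i = \mat M_i^{1/2}$ (both symmetric positive semidefinite), and for any unit vector $x$ compute
\[
    \abs{x^T (\mat M_1 - \mat M_2) x}
    = \abs{\|\mat A_1 x\|^2 - \|\mat A_2 x\|^2}
    = \bigl(\|\mat A_1 x\| + \|\mat A_2 x\|\bigr) \cdot \bigl|\|\mat A_1 x\| - \|\mat A_2 x\|\bigr|.
\]
The first factor is bounded by $\|\mat A_1\|_2 + \|\mat A_2\|_2$, and the second is exactly the integrand in the definition of the metric $d(\mat A_1, \mat A_2)$ from \cref{lemma:technical_result_concavity_matrix_square_root}, which by the reverse triangle inequality is at most $\|(\mat A_1 - \mat A_2) x\| \leq \|\mat A_1 - \mat A_2\|_2$.

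Taking the supremum over $\|x\|=1$ on the left and using that $\mat M_1 - \mat M_2$ is symmetric gives
\[
    \eucnorm{\mat M_1 - \mat M_2}[2]
    \leq \bigl(\eucnorm{\mat A_1}[2] + \eucnorm{\mat A_2}[2]\bigr) \eucnorm{\mat A_1 - \mat A_2}[2]
    \leq 2 \max \bigl(\eucnorm{\mat A_1}[2], \eucnorm{\mat A_2}[2]\bigr) \, \eucnorm{\mat A_1 - \mat A_2}[2].
\]
I would then conclude using the finite-dimensional equivalence of matrix norms $\|\dummy\|_2 \leq \|\dummy\|_F \leq \sqrt{d} \, \|\dummy\|_2$ to convert every factor into a Frobenius norm at the price of a dimension-dependent constant, yielding \cref{eq:second_technical_result} with $C(d) = 2 \sqrt{d}$.

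This mirrors the structure of \cref{lemma:technical_result_concavity_matrix_square_root}: replace the scalar inequality $|a-b| \leq \ldots$ by its pointwise-on-$x$ analogue after passing through $\|\mat A_i x\|$, then use the equivalence between the sup-over-unit-vectors metric $d(\dummy, \dummy)$ and the Frobenius norm. I do not expect any serious obstacle: the only place one must be careful is that the factorization $|a^2 - b^2| = (a+b)|a-b|$ is legitimate because $\|\mat A_i x\| \geq 0$, and that operator norms of the square roots naturally appear instead of Frobenius norms, which is resolved by the (dimension-dependent) norm equivalence step at the end.
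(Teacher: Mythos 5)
Your proof is correct and takes essentially the same approach as the paper: factor the quadratic-form difference $\abs{x^T(\mat M_1-\mat M_2)x}$ via the scalar identity $\abs{a^2-b^2}=(a+b)\abs{a-b}$ applied to $a=\eucnorm*{\mat M_1^{1/2}x}$, $b=\eucnorm*{\mat M_2^{1/2}x}$, bound the sup over the unit sphere by operator norms, and then convert to Frobenius norms by the finite-dimensional norm equivalence $\eucnorm{\dummy}[2]\leq\eucnorm{\dummy}[F]\leq\sqrt d\,\eucnorm{\dummy}[2]$. The paper phrases this through the auxiliary metric $d(\dummy,\dummy)$ and \cref{lemma:technical_result_concavity_matrix_square_root}, but the underlying computation is the same, and your direct write-up even spares the appeal to that lemma.
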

\begin{proof}
    In one dimension, the statement follows from the equation
    \[
        \forall m_1, m_2 \geq 0: \qquad \abs{m_1 - m_2} = \abs{\sqrt{m_1} - \sqrt{m_2}} \, \abs{\sqrt{m_1} + \sqrt{m_2}}.
    \]
    We can then show pass to $d(\dummy, \dummy)$ as follows:
    \begin{align*}
        \eucnorm*{\mat M_1^{1/2} - \mat M_2^{1/2}}[F]
        & \geq C \, \sup_{\eucnorm{x} = 1} \abs*{ \sqrt{x^T \mat M_1 x} - \sqrt{x^T \mat M_2 x}} \\
        & = C \, \sup_{ x \in S } \frac{\abs*{x^T \mat M_1 x - x^T \mat M_2 x}}{\sqrt{x^T \mat M_1 x} + \sqrt{x^T \mat M_2 x}} \geq C \, (\eucnorm*{\mat M_1^{1/2}}[2] + \eucnorm*{\mat M_2^{1/2}}[2])^{-1} \, \eucnorm{\mat M_1 - \mat M_2}[2],
    \end{align*}
    where $S := \{x: \eucnorm{x} = 1, x^T (\mat M_1 + \mat M_2) x > 0 \}$.
    This leads to the statement after rearranging.
\end{proof}

\bibliography{references}

\begin{thebibliography}{10}

\bibitem{abdulle2017spectral}
A.~Abdulle, G.~A. Pavliotis, and U.~Vaes.
\newblock Spectral methods for multiscale stochastic differential equations.
\newblock {\em SIAM/ASA J. Uncertain. Quantif.}, 5(1):720--761, 2017.

\bibitem{2016arXiv161101593A}
A.~{Aleksandrov} and V.~{Peller}.
\newblock {Operator Lipschitz functions (English translation)}.
\newblock {\em arXiv e-prints}, page arXiv:1611.01593, Nov 2016.

\bibitem{MR1751140}
R.~Bhatia and F.~Kittaneh.
\newblock Notes on matrix arithmetic-geometric mean inequalities.
\newblock {\em Linear Algebra Appl.}, 308(1-3):203--211, 2000.

\bibitem{BT05}
M.~{Bisi}, J.~A. {Carrillo}, and G.~{Toscani}.
\newblock {Contractive metrics for a Boltzmann equation for granular gases:
  Diffusive equilibria.}
\newblock {\em {J. Stat. Phys.}}, 118(1-2):301--331, 2005.

\bibitem{BT06}
M.~{Bisi}, J.~A. {Carrillo}, and G.~{Toscani}.
\newblock {Decay rates in probability metrics towards homogeneous cooling
  states for the inelastic Maxwell model.}
\newblock {\em {J. Stat. Phys.}}, 124(2-4):625--653, 2006.

\bibitem{BC07}
F.~{Bolley} and J.~A. {Carrillo}.
\newblock {Tanaka theorem for inelastic Maxwell models.}
\newblock {\em {Commun. Math. Phys.}}, 276(2):287--314, 2007.

\bibitem{MR2964689}
F.~Bolley, I.~Gentil, and A.~Guillin.
\newblock Convergence to equilibrium in {W}asserstein distance for
  {F}okker-{P}lanck equations.
\newblock {\em J. Funct. Anal.}, 263(8):2430--2457, 2012.

\bibitem{CCC}
E.~A. Carlen, J.~A. Carrillo, and M.~C. Carvalho.
\newblock Strong convergence towards homogeneous cooling states for dissipative
  {M}axwell models.
\newblock {\em Ann. Inst. H. Poincar\'{e} Anal. Non Lin\'{e}aire},
  26(5):1675--1700, 2009.

\bibitem{doi:10.1002/wcc.535}
A.~Carrassi, M.~Bocquet, L.~Bertino, and G.~Evensen.
\newblock Data assimilation in the geosciences: An overview of methods, issues,
  and perspectives.
\newblock {\em Wiley Interdisciplinary Reviews: Climate Change}, 9(5):e535,
  2018.

\bibitem{MR2355628}
J.~A. Carrillo and G.~Toscani.
\newblock Contractive probability metrics and asymptotic behavior of
  dissipative kinetic equations.
\newblock {\em Riv. Mat. Univ. Parma (7)}, 6:75--198, 2007.

\bibitem{Chen2012}
Y.~Chen and D.~S. Oliver.
\newblock Ensemble randomized maximum likelihood method as an iterative
  ensemble smoother.
\newblock {\em Math. Geosci.}, 44(1):1--26, Jan 2012.

\bibitem{MR3839555}
M.~Dashti and A.~M. Stuart.
\newblock The {B}ayesian approach to inverse problems.
\newblock In {\em Handbook of uncertainty quantification. {V}ol. 1, 2, 3},
  pages 311--428. Springer, Cham, 2017.

\bibitem{DL19}
Z.~{Ding} and Q.~{Li}.
\newblock {Mean-field limit and numerical analysis for Ensemble Kalman
  Inversion: linear setting}.
\newblock {\em arXiv e-prints}, page arXiv:1908.05575, Aug 2019.

\bibitem{duongmean}
M.~H. {Duong} and G.~A. {Pavliotis}.
\newblock {Mean field limits for non-Markovian interacting particles:
  convergence to equilibrium, GENERIC formalism, asymptotic limits and phase
  transitions}.
\newblock {\em Commun. Math. Sci.}, 16(8):2199--2230, 2018.

\bibitem{2019arXiv190308866G}
A.~{Garbuno-Inigo}, F.~{Hoffmann}, W.~{Li}, and A.~M. {Stuart}.
\newblock {Gradient Structure Of The Ensemble Kalman Flow With Noise}.
\newblock {\em arXiv e-prints}, page arXiv:1903.08866, Mar 2019.

\bibitem{MR1127323}
M.~Gelbrich.
\newblock On a formula for the {$L^2$} {W}asserstein metric between measures on
  {E}uclidean and {H}ilbert spaces.
\newblock {\em Math. Nachr.}, 147:185--203, 1990.

\bibitem{MR752258}
C.~R. Givens and R.~M. Shortt.
\newblock A class of {W}asserstein metrics for probability distributions.
\newblock {\em Michigan Math. J.}, 31(2):231--240, 1984.

\bibitem{HV18}
M.~{Herty} and G.~{Visconti}.
\newblock {Kinetic Methods for Inverse Problems}.
\newblock {\em arXiv e-prints}, page arXiv:1811.09387, Nov 2018.

\bibitem{MR3041539}
M.~A. Iglesias, K.~J.~H. Law, and A.~M. Stuart.
\newblock Ensemble {K}alman methods for inverse problems.
\newblock {\em Inverse Problems}, 29(4):045001, 20, 2013.

\bibitem{2019arXiv190810890N}
N.~{N{\"u}sken} and S.~{Reich}.
\newblock {Note on Interacting Langevin Diffusions: Gradient Structure and
  Ensemble Kalman Sampler by Garbuno-Inigo, Hoffmann, Li and Stuart}.
\newblock {\em arXiv e-prints}, page arXiv:1908.10890, Aug 2019.

\bibitem{MR3654885}
C.~Schillings and A.~M. Stuart.
\newblock Analysis of the ensemble {K}alman filter for inverse problems.
\newblock {\em SIAM J. Numer. Anal.}, 55(3):1264--1290, 2017.

\bibitem{T99}
G.~Toscani.
\newblock Entropy production and the rate of convergence to equilibrium for the
  {F}okker-{P}lanck equation.
\newblock {\em Quart. Appl. Math.}, 57(3):521--541, 1999.

\bibitem{V98}
C.~Villani.
\newblock On the spatially homogeneous {L}andau equation for {M}axwellian
  molecules.
\newblock {\em Math. Models Methods Appl. Sci.}, 8(6):957--983, 1998.

\bibitem{MR2459454}
C.~Villani.
\newblock {\em Optimal transport}, volume 338 of {\em Grundlehren der
  Mathematischen Wissenschaften [Fundamental Principles of Mathematical
  Sciences]}.
\newblock Springer-Verlag, Berlin, 2009.
\newblock Old and new.

\end{thebibliography}
\bibliographystyle{abbrv}

\end{document}